\author{Jhone Caldeira}
\address{Instituto de Matem\'atica e Estat\'istica, Universidade Federal de Goi\'as,
Goi\^ania-GO, 74690-900, Brazil}
\email{jhone@ufg.br}
\author{Emerson de Melo}
\address{Department of Mathematics, University of Bras\'ilia, Bras\'ilia-DF 70910-900, Brazil}
\email{emerson@mat.unb.br}
\thanks{This paper is dedicated to the 60th birthday of Professor Pavel Shumyatsky. 
We are grateful for his inspiring ideas on this research topic.}
\keywords{Automorphisms, centralizers, $p$-groups, Frobenius groups}
\subjclass[2010]{Primary 20D45; secondary 20D15, 20F40.}
\title{Non-metacyclic groups acting with nilpotent centralizers}
\date{2021}
\newtheorem{theorem}{\sc Theorem}[section]
\newtheorem{lemma}[theorem]{\sc Lemma}
\newtheorem{remark}[theorem]{\sc Remark}
\begin{document}


\begin{abstract}
Let $A$ be a non-metacyclic finite group. Suppose that $A$ acts coprimely on a finite group $G$ 
in such a manner that $C_G(a)$ is nilpotent for any $a\in A^{\#}$. 
In the present paper we investigate some conditions on $A$ which imply that $G$ is 
nilpotent with ``bounded'' nilpotency class. 
More precisely, we generalize known results on action of $q$-groups and Frobenius groups, where $q$ is a prime.

\end{abstract}

\maketitle

\section{Introduction}

Suppose that a finite group $A$ acts by automorphisms on a finite group $G$.  The action is coprime if the groups $A$ and $G$ have coprime orders. We denote by $C_G(a)$ the set $\{g\in G\ |\ g^a=g \},$ the centralizer of $a$ in $G$ (the fixed-point subgroup). Similarly, we denote by $C_G(A)$ the set $\{g\in G\ |\ g^a=g \ \textrm{for all} \ a\in A\}.$ In what follows we denote by $A^\#$ the set of nontrivial elements of $A$. If $A$ is metacyclic and $C_G(a)$ is nilpotent for any $a\in A^{\#}$, it follows from the classification of finite simple groups that the group $G$ is soluble (see \cite{WC}). Furthermore, we can easily construct examples showing that $G$ need not be nilpotent even if the action is coprime. On the other hand, when $A$ is not metacyclic and acts coprimely on a finite group $G$, recent results have shown that the properties of $G$ are in a certain sense close to the corresponding properties of the centralizers of non trivial elements. 

Recently, prompted by Mazurov's problem 17.72 in the Kourovka 
Notebook \cite{KourNot}, some attention was given to the situation where a Frobenius group $FH$ acts by 
automorphisms on a finite group $G$. Recall that a Frobenius group $FH$ with kernel 
$F$ and complement $H$ can be characterized 
as a finite group that is a semidirect product of a normal subgroup $F$ by $H$ such that 
$C_F(h) =1$ for every $ h \in H \setminus \{1\}$. By Thompson's theorem \cite{T} the kernel $F$ 
is nilpotent, and by Higman's theorem \cite{Hig} the nilpotency class of $F$ 
is bounded in terms of the least prime divisor of $|H|$.

In the following we present some results about coprime action of non-metacyclic groups with nilpotent centralizers.

1. Let $q$ be a prime. If $A$ is an elementary abelian $q$-group of rank 3 and $C_G(a)$ is nilpotent 
of class $c$ for any $a\in A^\#$, then the group $G$ is nilpotent with class bounded solely in terms of $c$ and $q$ \cite{War,Sh}.

2. In the recent article \cite{Eme1} the above result was extended to the case where $A$ is not necessarily abelian. Namely, it was shown that if $A$ is a finite group of prime exponent $q$ and order at least $q^3$ acting on a finite $q'$-group $G$ in such a manner that $C_G(a)$ is nilpotent of class at most $c$ for any $a\in A^{\#}$, then $G$ is nilpotent with class bounded solely in terms of $c$ and $q$. In \cite{Eme2}, this result has been extended in the following way: if $\gamma_{\infty} (C_{G}(a))$ has order at most $m$ for any $a \in A^{\#}$, then the order of $\gamma_{\infty} (G)$ is bounded solely in terms of $m$. Furthermore, 
it was shown that if the Fitting subgroup of $C_{G}(a)$ has index at most $m$ for any $a \in A^{\#}$, then the second Fitting subgroup of $G$ has index bounded solely in terms of $m$.

3. Let $p$ be a prime and $A=FH$ a Frobenius group with complement $H$ and whose kernel $F$ is an elementary abelian group of order $p^2$. If $FH$ acts coprimely on a finite group $G$ in such a manner that  $C_G(H)$ and $C_G(x)$ are nilpotent for any $x\in F^{\#}$, then $G$ is nilpotent. In addition, there exists a constant $P = P(c, |H|)$ depending only on $c$ and $|H|$ with the following property: if $p\geq P$, $C_G(H)$ and $C_G(x)$ are nilpotent of class at most $c$ for any $x\in F^{\#}$, then $G$ is nilpotent with class bounded solely in terms of $c$ and $|H|$ \cite{AS}. Examples in \cite{KhuMakShu} show that the result on the nilpotency class of $G$ is no longer true if $p=2$. 

4. Let $p$ be a prime and $A=FH$ a Frobenius group with complement $H$ and whose kernel $F$ is an elementary abelian group of order $p^3$. If $FH$ acts coprimely on a finite
group $G$ in such a manner that  $C_G(H)$ and $C_G(x)$ are nilpotent of class at most $c$ for any $x\in F^{\#}$, then $G$ is nilpotent with class bounded solely in terms of $c$ and $|H|$ \cite{CalMelShu}.

The next result generalizes the results mentioned in Item (2). It is an immediate consequence of the theory of regular $q$-groups and well-known results on metacyclic $q$-groups. In fact, in Lemma \ref{C1} we prove that if $q>3$ is a prime and $A$ is a non-metacyclic $q$-group, then $A$ contains a subgroup of exponent $q$ and order $q^3$ which is sufficient to obtain the next theorem since the proofs in \cite{Eme1} and \cite{Eme2} are reduced to the case where $A$ is a group of exponent $q$ and order $q^3$. Throughout the paper we use the expression ``$(a,b,\dots )$-bounded'' to abbreviate ``bounded from above in terms of  $a,b,\dots$ only''.





\begin{theorem}\label{main1}
Let $q>3$ be a prime and $A$ a non-metacyclic finite $q$-group acting by automorphisms on a finite $q'$-group $G$. 
\begin{enumerate}
\item[i)] If $C_{G}(a)$ is nilpotent of class at most $c$ for any $a\in A^{\#}$, then $G$ is nilpotent with $(c,q)$-bounded class. 
\item[ii)] If $\gamma_{\infty} (C_{G}(a))$ has order at most $m$ for any $a \in A^{\#}$, then the order of $\gamma_{\infty} (G)$ is bounded solely in terms of $m$. 
\item[iii)] If the Fitting subgroup of $C_{G}(a)$ has index at most $m$ for any $a \in A^{\#}$, then the second Fitting subgroup of $G$ has index bounded solely in terms of $m$.
\end{enumerate}   
\end{theorem}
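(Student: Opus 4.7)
The plan is to reduce the theorem to the special case in which the acting group has exponent $q$ and order $q^{3}$, and then to invoke the results of \cite{Eme1,Eme2} as a black box.

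First I would apply Lemma \ref{C1} to extract a subgroup $B\leq A$ of exponent $q$ and order $q^{3}$; this is precisely the step where the hypothesis $q>3$ enters, since the corresponding structural statement is known to fail for $q\in\{2,3\}$. The inclusion $B^{\#}\subseteq A^{\#}$ immediately transfers each of the three hypotheses in (i)--(iii) from the centralizers $C_{G}(a)$ with $a\in A^{\#}$ to the centralizers $C_{G}(b)$ with $b\in B^{\#}$. Moreover, since $B$ is a $q$-group and $G$ is a $q'$-group, the action of $B$ on $G$ is coprime, so the hypotheses required by the theorems of \cite{Eme1,Eme2} are in place.

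Next I would quote the main theorem of \cite{Eme1} for part (i), and the two main theorems of \cite{Eme2} for parts (ii) and (iii). These are stated precisely for a coprime action on $G$ by a group of prime exponent $q$ and order at least $q^{3}$, and their conclusions match those claimed here: a $(c,q)$-bounded nilpotency class of $G$ in (i); an $m$-bounded order for $\gamma_{\infty}(G)$ in (ii); and an $m$-bounded index for the second Fitting subgroup of $G$ in (iii). Applying those results to the action of $B$ on $G$ yields exactly the statement of Theorem \ref{main1}.

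The only substantive content therefore lies in Lemma \ref{C1}; once it is in hand, the theorem follows by a purely formal reduction. The hard part will thus be the structural argument behind Lemma \ref{C1}, which is expected to combine the theory of regular $q$-groups (available for $q>3$) with classical facts on metacyclic $q$-groups, in order to rule out the possibility that every subgroup of $A$ of exponent $q$ has order at most $q^{2}$.
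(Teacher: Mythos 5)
Your proposal matches the paper's argument exactly: the authors also reduce to a subgroup of exponent $q$ and order $q^{3}$ supplied by Lemma \ref{C1} (whose proof indeed rests on regularity of $q$-groups of order at most $q^{q}$, which is where $q>3$ is used) and then cite the main results of \cite{Eme1} and \cite{Eme2} as black boxes. The paper treats the theorem as an immediate consequence of these ingredients, just as you do.
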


As a consequence of the classification of groups with no noncyclic characteristic abelian subgroups we prove in Lemma \ref{submet} that if $A=FH$ is a non-metacyclic Frobenius group, then the kernel $F$ contains a normal elementary abelian subgroup of rank 2. We use Theorem \ref{main1} and this fact to extend the result mentioned in Item (4) to the case where $F$ is not necessarily abelian.


\begin{theorem}\label{main3}
Let $FH$ be a Frobenius group with a non-metacyclic kernel $F$ of odd order and complement $H$. Suppose that $FH$ acts coprimely on a finite
group $G$ in such a manner that $C_G(H)$ and $C_G(x)$ are nilpotent of class at most $c$ for any $x\in F^{\#}$. Then $G$ is nilpotent 
of $(c,|H|)$-bounded class.
\end{theorem}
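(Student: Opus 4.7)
The proof should combine Lemma~\ref{submet}, Theorem~\ref{main1}, and the Frobenius-action results of Items~(3) and~(4) in the introduction. I would begin by invoking Thompson's theorem to conclude that the kernel $F$ is nilpotent, and writing $F=\prod_p F_p$ as the direct product of its Sylow subgroups. Since a direct product of pairwise coprime metacyclic groups is again metacyclic, non-metacyclicity of $F$ forces some Sylow $F_p$ to be non-metacyclic, with $p$ odd. The subgroup $F_p$ is characteristic in $F$, hence $H$-invariant, and $H$ acts faithfully and fixed-point-freely on $F_p$, so $F_pH$ is a Frobenius subgroup of $FH$.

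Next I would apply Lemma~\ref{submet} to the non-metacyclic Frobenius group $F_pH$ to obtain an $H$-invariant elementary abelian subgroup $V\leq F_p$ of rank $2$, so $|V|=p^2$ and $VH$ is a Frobenius group with elementary abelian kernel of order $p^2$ acting coprimely on $G$. The centralizer hypotheses pass to $V^{\#}\subseteq F^{\#}$, so Item~(3) gives that $G$ is nilpotent. Furthermore, Item~(3) provides a threshold $P=P(c,|H|)$ such that if $p\geq P$, the class of $G$ is already $(c,|H|)$-bounded; this disposes of all sufficiently large~$p$.

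If $p<P(c,|H|)$, then $p$ itself is $(c,|H|)$-bounded. When $p>3$, applying Theorem~\ref{main1}(i) to the non-metacyclic $p$-group $F_p$ acting on $G$ yields that the class of $G$ is $(c,p)$-bounded, and combined with the bound on $p$ this becomes a $(c,|H|)$-bound.

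The main obstacle I expect is the residual case $p=3$, where Theorem~\ref{main1} is unavailable. Here the plan is to exploit the Frobenius structure more delicately: $|H|$ is coprime to $3$ by coprimality of the action, and $|H|$ must be odd whenever $F_3$ is non-abelian (an involution in $H$ would have to invert $F_3$). A case analysis on $F_3$ (abelian versus non-abelian), combined with the requirement that $|H|$ divide the $3'$-part of $|\mathrm{Aut}(Z(F_3))|$, should force a characteristic elementary abelian subgroup of rank at least $3$ to appear in $F_3$---namely $\Omega_1(F_3)$ in the abelian case, and $\Omega_1(Z(F_3))$ in the non-abelian case---inside which an $H$-invariant elementary abelian subgroup of rank exactly $3$ can be selected, so that Item~(4) delivers the required $(c,|H|)$-bound.
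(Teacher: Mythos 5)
Your overall architecture coincides with the paper's: reduce to a non-metacyclic Sylow $p$-subgroup of $F$, use Lemma~\ref{submet} to produce an elementary abelian subgroup of order $p^2$, invoke \cite[Theorem 1.5]{AS} to get nilpotency together with the threshold $P=P(c,|H|)$, handle $3<p<P$ by Theorem~\ref{main1}, and isolate $p=3$ as the residual case. Your analysis of $p=3$ is also in the paper's spirit: the paper likewise argues that if $Z(F)$ had rank at most $2$ then $H$ would act fixed-point-freely on $\Omega_1(Z(F))\in\{C_3,\,C_3\times C_3\}$, forcing an involution in $H$, hence $F$ abelian of rank at most $2$ and therefore metacyclic, a contradiction; so $Z(F)$ contains an elementary abelian subgroup of rank $3$.

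The one step of yours that would fail is the very last one: that ``an $H$-invariant elementary abelian subgroup of rank exactly $3$ can be selected, so that Item~(4) delivers the bound.'' This is not always possible. If, say, $F=(C_3)^4$ and $H=C_5$ acts as a Singer cycle, then $\Omega_1(Z(F))=F$ is an irreducible $H$-module of rank $4$ (the order of $3$ modulo $5$ is $4$), so $F$ has no $H$-invariant subgroup of order $3^3$ at all, and there is no Frobenius group $VH$ to which the result of \cite{CalMelShu} could be applied; more generally, complete reducibility only gives invariant subgroups whose ranks are sums of the irreducible constituents' dimensions, and $3$ need not occur among these. The fix --- and what the paper actually does --- is to drop the $H$-invariance requirement altogether: any elementary abelian subgroup $E$ of rank $3$ inside $F$ acts coprimely on $G$ with $C_G(e)$ nilpotent of class at most $c$ for every $e\in E^{\#}$, so the rank-$3$ results of \cite{War,Sh} or \cite{Eme1} (Items~(1) and~(2) of the introduction) already bound the class of $G$ in terms of $c$ alone, the prime being $3$. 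Item~(4) is never needed in this case.
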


Although we do not know  if the nilpotency class would not be bounded when the kernel $F$ is a non-metacyclic group of even order, it is possible to construct a Frobenius group with a non-metacyclic kernel of order 16 which does not contain a subgroup of exponent 2 and order 8. The next result generalizes Item (3) when $FH$ is a supersolvable group. Recall that a group $N$ is supersolvable if $N$ possesses a normal series with cyclic factors such that each term is normal in $N$. It is easy to see that a supersolvable group possesses a 
chief series whose factors have prime order.

\begin{theorem}\label{main2}
Let $FH$ be a supersolvable non-metacyclic Frobenius group with kernel $F$ and complement $H$. Suppose that $FH$ acts coprimely on a finite
group $G$ in such a manner that  $C_G(H)$ and $C_G(x)$ are nilpotent of class at most $c$ for any $x\in F^{\#}$. Then $G$ is nilpotent 
of $(c,|H|)$-bounded class.
\end{theorem}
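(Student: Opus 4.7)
The plan is to use the supersolvable structure of $FH$ to locate an $H$-invariant subgroup of $F$ on which one of the earlier results---Item (3), Item (4), or Theorem \ref{main1}---applies, and hence deduce the $(c,|H|)$-bounded nilpotency class of $G$.

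I first record some structural consequences. In a supersolvable Frobenius group $FH$, each chief factor inside the kernel has prime order $p$, and because $H$ acts fixed-point-freely on every such factor one gets $H \hookrightarrow \mathrm{Aut}(C_p) \cong C_{p-1}$. Hence $H$ is cyclic, and for every prime $p$ dividing $|F|$ one has $|H| \mid p-1$; in particular $|F|$ is odd and coprime to $|H|$. A classical fact (an involution with fixed-point-free action forces the underlying group to be abelian) also yields that $F$ is abelian whenever $|H|=2$. I then appeal to Lemma \ref{submet} to obtain an $H$-invariant elementary abelian subgroup $V \subseteq F$ of rank $2$; say $V \subseteq F_p$.

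The main case distinction is on the size of $p$. If $p \geq P(c,|H|)$, with $P$ the constant from Item (3), then Item (3) applied to $V \rtimes H$ acting on $G$ directly yields that $G$ is nilpotent of $(c,|H|)$-bounded class. Otherwise $p$ itself is $(c,|H|)$-bounded, and I split two sub-cases. When $|H|=2$, $F$ is abelian and non-metacyclic, hence of rank $\geq 3$; picking a Sylow $F_r$ of rank $\geq 3$ and any $3$-dimensional subspace $W$ of $\Omega_1(F_r)$ (automatically $H$-invariant because $H$ inverts $F_r$), Item (4) applied to $W \rtimes H$ gives the conclusion. When $|H|\geq 3$, every prime dividing $|F|$ exceeds $3$; provided some Sylow $F_q$ of $F$ is non-metacyclic, one has $q>3$ and Theorem \ref{main1}(i) applied to the coprime action of $F_q$ on $G$ yields $G$ nilpotent of $(c,q)$-bounded---hence $(c,|H|)$-bounded---class.

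The main obstacle is the residual configuration in which $|H| \geq 3$, the prime $p$ is bounded, and every Sylow subgroup of $F$ is metacyclic (the prototype being $F = C_p^2$ with $H$ acting diagonally). Here Theorem \ref{main1} is inapplicable (no non-metacyclic Sylow) and Item (4) is inapplicable (no rank-$3$ elementary abelian subgroup), while Item (3) stops short of bounding the class because $p$ is too small. Handling this case will require a finer analysis of the supersolvable decomposition $F/\Phi(F) = L_1 \oplus \cdots \oplus L_d$ into $1$-dimensional $H$-submodules, combined with Khukhro--Makarenko--Shumyatsky-type techniques for Frobenius actions with cyclic kernel, in order to extract enough centralizer data from the various $L_i \rtimes H$ to produce the uniform bound on the nilpotency class of $G$.
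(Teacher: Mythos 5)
Your reduction for large $p$ (apply Item (3) to the rank-two subgroup supplied by Lemma \ref{submet} when $p\geq P(c,|H|)$) is exactly what the paper does, and your observation that supersolvability forces $H$ cyclic with $|H|\mid p-1$ is correct. But the rest of the argument rests on a misreading of the hypothesis: the theorem assumes that $FH$ is non-metacyclic, \emph{not} that $F$ is. The group $C_p\times C_p$ is metacyclic, yet $(C_p\times C_p)\rtimes H$ is non-metacyclic (its derived subgroup is noncyclic), so after the reduction via Lemma \ref{submet} one lands precisely in the situation where $F$ is metacyclic. Consequently your $|H|=2$ subcase fails at the first step --- ``$F$ is abelian and non-metacyclic, hence of rank $\geq 3$'' is a false premise, as $F=C_3\times C_3$ inverted by $C_2$ already satisfies all hypotheses --- and your $|H|\geq 3$ subcase only works under the unwarranted proviso that some Sylow subgroup of $F$ is non-metacyclic. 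Theorem \ref{main1} and Item (4) are therefore unavailable in general.

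This means the ``residual configuration'' you flag at the end is not a residual case at all: it is the whole theorem once $p\leq P(c,|H|)$, and you leave it unproven, offering only a plan (``a finer analysis\dots combined with Khukhro--Makarenko--Shumyatsky-type techniques''). That missing piece is the technical heart of the paper: Section 3 proves Theorem \ref{Liealgthe} for $F$ elementary abelian of rank $2$ and $H$ cyclic, by grading the associated Lie ring by the eigenspaces of a generator of the normal subgroup $Z\leq F$ of order $p$ (whose existence is exactly what supersolvability buys), decomposing $L_0=C_L(F)+\sum_f C_L(ZH^f)$ via Lemma \ref{lemma dec Frb}, and verifying the two commutator conditions that force bounded nilpotency class via the criterion from \cite{KhuShu} and \cite{CalMel1}. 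The proof of Theorem \ref{main2} then passes from $G$ to its associated Lie ring and invokes that result. Without an argument of this kind your proposal does not establish the theorem.
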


The paper is organized as follows. In Section 2, we summarize without
proofs some basic results on coprime actions and we prove Theorem \ref{main1}  and Theorem \ref{main3}. In the third section, we prove some results on Lie rings admitting a Frobenius group of automorphisms. The proof of Theorem \ref{main2} is given in Section 4.

\section{Preliminaries}

If $A$ is a group of automorphisms of a group $G$, the subgroup generated by elements of the form $g^{-1}g^\alpha$ with $g\in G$ and $\alpha\in A$ is denoted by $[G,A]$. It is well-known that the subgroup $[G,A]$ is an $A$-invariant normal subgroup of $G$. Our first lemma is a collection of well-known facts on coprime actions (see for example \cite{GO}). Throughout the paper we will use it without explicit references.
\begin{lemma}
Let  $A$ be a group of automorphisms of a finite group $G$ such that $(| G |, | A |) = 1$. Then:
\begin{enumerate}
\item[i)] $G = C_{G}(A)[G,A]$.
\item[ii)] $[G,A,A]=[G,A]$.
\item[iii)] $A$ leaves invariant some Sylow $p$-subgroup of $G$ for each prime $p\in\pi(G)$.
\item[iv)] $C_{G/N}(A)=C_G(A)N/N$ for any $A$-invariant normal subgroup $N$ of $G$.
\item[v)] If $G$ is nilpotent and $A$ is a noncyclic abelian group, then $G=\prod_{a\in A^{\#}}C_{G}(a)$.
\end{enumerate}
\end{lemma}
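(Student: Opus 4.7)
The plan is to derive all five items from one uniform tool, namely the Schur--Zassenhaus theorem applied inside the semidirect product $\Gamma=G\rtimes A$. I would treat (iv) first, bootstrap (i) and (ii) from it, and then handle (iii) and (v) separately.

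For (iv), suppose $gN\in C_{G/N}(A)$, so that $g^{-1}g^{a}\in N$ for every $a\in A$. Inside $\Gamma$ this reads $g^{-1}ag=(g^{-1}g^{a})\,a\in NA$, showing that $g^{-1}Ag$ is a complement to $N$ in $NA$. Since $A$ is another such complement and $(|N|,|A|)=1$, Schur--Zassenhaus conjugates $g^{-1}Ag$ to $A$ by some $n\in N$; then $gn^{-1}\in N_{\Gamma}(A)\cap G=C_{G}(A)$, so $g\in C_{G}(A)N$. Items (i) and (ii) now follow immediately: $A$ centralises $G/[G,A]$ by definition, so (iv) gives $G/[G,A]=C_{G}(A)[G,A]/[G,A]$ and hence (i); writing any $g=ch$ with $c\in C_{G}(A)$ and $h\in[G,A]$ via (i), one computes $[g,a]=[ch,a]=[c,a]^{h}[h,a]=[h,a]$, so every generator of $[G,A]$ already lies in $[[G,A],A]$, which is (ii).

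For (iii), pick a Sylow $p$-subgroup $P$ of $G$; because $p\nmid|A|$, $P$ is also a Sylow $p$-subgroup of $\Gamma$. The Frattini argument yields $\Gamma=G\cdot N_{\Gamma}(P)$, hence $|N_{\Gamma}(P):N_{G}(P)|=|A|$, and Schur--Zassenhaus inside $N_{\Gamma}(P)$ produces a subgroup $A'$ of order $|A|$ normalising $P$. A second application of Schur--Zassenhaus, this time in $\Gamma$, conjugates $A'$ to $A$ by an element $g\in G$, so that $P^{g^{-1}}$ is $A$-invariant.

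Finally (v) reduces, via (iii) and the decomposition of the nilpotent group $G$ as a direct product of its Sylow subgroups, to the case of a single $p$-group. On the Frattini quotient $V=G/\Phi(G)$, Maschke's theorem splits the $A$-module $V$ into simple summands; by Schur's lemma and Wedderburn's theorem on finite division rings, the commutant of each simple is a finite field, so the image of the abelian group $A$ in its multiplicative group is cyclic. Noncyclicity of $A$ therefore forces each simple summand to be centralised by some $a\in A^{\#}$, whence $V=\sum_{a\in A^{\#}}C_{V}(a)$. Combining (iv) with the Frattini nongenerator property lifts this to the equality $G=\langle C_{G}(a):a\in A^{\#}\rangle$ claimed in (v). The main technical input everywhere is Schur--Zassenhaus; the only step requiring genuine care is this last passage in (v), where one couples Maschke with the Frattini argument to descend from the elementary abelian quotient back to $G$.
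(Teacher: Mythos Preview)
The paper does not give a proof of this lemma at all; it simply records the five items as ``a collection of well-known facts on coprime actions'' and refers to Gorenstein's textbook. There is therefore no argument to compare against.

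Your proposal is correct and is essentially the standard textbook derivation via the semidirect product $\Gamma=G\rtimes A$ and Schur--Zassenhaus, which is exactly how these facts are established in the reference the paper cites. Two small remarks. First, each time you invoke the conjugacy part of Schur--Zassenhaus (in (iv) and twice in (iii)) you are implicitly using that one of the two coprime factors is solvable; this is of course guaranteed by the odd-order theorem, but it is worth flagging since without it the conjugacy statement is not elementary. Second, the paper writes the conclusion of (v) as a set product $G=\prod_{a\in A^{\#}}C_G(a)$, whereas your Frattini-quotient argument literally yields $G=\langle C_G(a):a\in A^{\#}\rangle$; this is harmless here, since every application in the paper is to additive groups of Lie rings or abelian sections, where generation and sum coincide.
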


We will require the following well-known characterization of metacyclic groups (see for example \cite[Theorem 2.6]{Bla}).

\begin{lemma}\label{c}
For $q$ an odd prime, a $q$-group $A$ is metacyclic if and only if $[A:A^q]\leq q^2$.
\end{lemma}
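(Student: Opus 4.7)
The plan is to prove the two implications of Lemma \ref{c} separately.

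\textbf{Forward direction.} Suppose $A$ is metacyclic, with $N \triangleleft A$ cyclic and $A/N$ cyclic. Then $A/A^q$ inherits the metacyclic structure: the image $NA^q/A^q$ is a cyclic normal subgroup, and the corresponding quotient is cyclic. Both of these cyclic groups have exponent dividing $q$ (being images of cyclic groups where $q$-th powers have been killed), so each has order at most $q$. Hence $[A:A^q] \leq q^2$.

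\textbf{Reverse direction.} This is the substantive part, which I would handle by induction on $|A|$. The hypothesis forces $d(A)\leq 2$, since $\Phi(A) = A^q[A,A] \supseteq A^q$ gives $[A:\Phi(A)] \leq q^2$. The case $d(A)\leq 1$ is trivial. Assume $d(A) = 2$ and pick a central subgroup $Z$ of order $q$. Then $[A/Z : (A/Z)^q] = [A : A^qZ] \leq q^2$, so by induction $A/Z$ is metacyclic, say with cyclic normal subgroup $M/Z$ and cyclic quotient $A/M$. The crux is to lift this structure, that is, to locate inside $M$ a cyclic characteristic subgroup whose image is still $M/Z$ and whose complement in $A$ remains cyclic modulo it.

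For odd $q$ this lifting step is carried out by invoking the regularity of $q$-groups of class less than $q$, which makes the $q$-th power map essentially additive modulo commutators; combined with the rank bound $d(M)\leq 2$ derived from $[A:A^q]\leq q^2$, this forces the pullback to be cyclic. The main obstacle is precisely this lifting, because regularity can fail when the class of $A$ approaches $q$ (especially when $q$ is small), and in that situation one must analyse $q$-groups of maximal class or of small rank directly. The complete case analysis is the content of Blackburn's structural theorem on metacyclic $p$-groups, so for the purposes of the paper I would simply invoke \cite[Theorem~2.6]{Bla} rather than reconstruct this machinery.
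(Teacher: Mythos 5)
The paper offers no proof of this lemma at all---it simply cites Blackburn (\cite[Theorem 2.6]{Bla})---and your proposal ultimately does the same for the substantive reverse implication, so the two treatments coincide where it matters. Your self-contained argument for the forward direction is correct (and does not even require $q$ odd), and deferring the hard direction to Blackburn, rather than relying on the regularity sketch (which, as you note, breaks down for groups of large class), is exactly what the authors do.
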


We will also use the following fact about regular  $q$-groups: if $G$ is a $q$-group for some prime $q$ and $|G|\leq q^q$, then $G$ is a regular $q$-group. In particular, $|G/G^q|=|\Omega_1(G)|$.

\begin{lemma}\label{C1} Let $q>3$ be a prime and $A$ a non-metacyclic group. Then $A$ contains a subgroup of exponent $q$ and order $q^3$.
\end{lemma}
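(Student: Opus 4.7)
The plan is to reduce to a minimal non-metacyclic subgroup of $A$ and then combine Blackburn's classification of such subgroups with the regularity theory for small $q$-groups.

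First, I would apply Lemma \ref{c}: since $A$ is non-metacyclic and $q$ is odd, $[A:A^q]\geq q^3$. Among the (nonempty) family of non-metacyclic subgroups of $A$, pick $B\leq A$ of minimum order. Then $B$ is minimal non-metacyclic, that is, every proper subgroup of $B$ is metacyclic, and Lemma \ref{c} applied to $B$ gives $[B:B^q]\geq q^3$. So the problem reduces to understanding the minimal non-metacyclic $q$-groups.

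The crux is Blackburn's classification of minimal non-metacyclic finite $p$-groups: for an odd prime $p>3$, the only such groups are the elementary abelian group of order $p^3$ and the non-abelian group of order $p^3$ and exponent $p$. Hence $|B|=q^3$, and together with $[B:B^q]\geq q^3$ this forces $B^q=1$, so $B$ has exponent $q$ and order $q^3$, as required.

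The main obstacle is invoking this classification result. An alternative route, more faithful to the paper's hint of using regular $q$-group theory together with the metacyclicity criterion, is to establish only the weaker bound $|B|\leq q^q$ on a minimal non-metacyclic $B$; since $q>3$ gives $q\geq 5$, $B$ is then a regular $q$-group, so that $|\Omega_1(B)|=[B:B^q]\geq q^3$ and $\Omega_1(B)$ has exponent $q$ by the standard regularity identities. Any subgroup of $\Omega_1(B)$ of order $q^3$ is then the desired subgroup of $A$. Either way, the heart of the proof is controlling the order of a minimal non-metacyclic $q$-group, which for $q>3$ is furnished (sharply) by Blackburn's theorem.
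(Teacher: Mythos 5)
Your main argument is correct but proceeds along a genuinely different route from the paper. You reduce to a minimal non-metacyclic subgroup $B$ and then invoke Blackburn's classification of minimal non-metacyclic $p$-groups (for odd $p$ these are exactly the elementary abelian group of order $p^3$ and the nonabelian group of order $p^3$ and exponent $p$), which immediately yields the desired subgroup; indeed, once the classification is granted, you do not even need the index bound $[B:B^q]\geq q^3$. The paper deliberately avoids this heavier input: it argues by induction on $|A^q|$, passing to $A/N$ for a central subgroup $N\leq A^q$ of order $q$ (which stays non-metacyclic by Lemma \ref{c}, since $N\leq A^q$), pulling back a subgroup $K$ of order $q^4$ with $K^q\leq N$, and then using regularity of $K$ (here is where $q>3$ enters, via $q^4\leq q^q$) to get $|\Omega_1(K)|=|K/K^q|\geq q^3$ with $\Omega_1(K)$ of exponent $q$. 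So your approach buys brevity and a sharper structural conclusion (the subgroup is a \emph{minimal} non-metacyclic one of order exactly $q^3$) at the cost of citing a full classification theorem, while the paper's induction uses only the metacyclicity criterion of Lemma \ref{c} together with elementary regular $q$-group theory. One caveat: your sketched ``alternative route'' is not self-contained as written, because the bound $|B|\leq q^q$ for a minimal non-metacyclic $B$ is exactly the nontrivial point --- without the classification (or an argument like the paper's induction) it is not clear how to obtain it, since minimality only gives, e.g., that every maximal subgroup is $2$-generated, which does not bound $|B|$. If you wish to avoid the classification, you should replace that assertion by the paper's inductive reduction to a subgroup of order $q^4$.
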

\begin{proof}
We use induction on the order of $A^q$. If $A^q=1$ there is nothing to prove since $A$ is non-metacyclic. Otherwise we can find a normal subgroup $N$ of $A$ of order $q$, which is contained in $A^q$. It follows from the inductive hypothesis that $A/N$ has a subgroup $K$ such that $N\leq K$ and $K/N$ has exponent $q$ and order $q^3$. Since $N$ has order $q$ we obtain that $K$ has order $q^4$. Thus $K$ is a regular group and $\Omega_1(K)\geq q^3$ since $K/K^q$ has order at least $q^3$.  
\end{proof}

The next theorem is proved in \cite[Theorem 5.5.3]{GO}.
\begin{theorem}\label{noncyclic}
Let $q$ be an odd prime. Let $Q$ be a $q$-group which contains no noncyclic characteristic abelian subgroups. Then $Q$ is isomorphic to the central product of a cyclic group and an extra-special group of exponent $q$.
\end{theorem}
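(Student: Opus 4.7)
The plan is to identify a characteristic extraspecial subgroup $E$ of $Q$ of exponent $q$ and then show $Q = E \cdot C_Q(E)$ with $C_Q(E)$ cyclic. We may assume $Q$ is nonabelian, for otherwise $Q$ itself is characteristic abelian, hence cyclic by hypothesis, and the statement holds with trivial extraspecial factor. Since $Z := Z(Q)$ is a characteristic abelian subgroup, it is cyclic; let $Z_0 = \Omega_1(Z)$ be its unique subgroup of order $q$.

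To construct $E$, apply Thompson's critical subgroup theorem to obtain a characteristic subgroup $T$ of $Q$ with $\Phi(T) \leq Z(T)$, $[T, Q] \leq Z(T)$, and $C_Q(T) = Z(T)$. In particular $T$ has class at most $2$, and since $q \geq 3$ the group $T$ is regular. Set $E := \Omega_1(T)$, a characteristic subgroup of $Q$ of exponent $q$. The subgroup $Z(T)$ is characteristic abelian in $Q$, hence cyclic, and $[E, E] \leq [T, T] \leq Z(T)$ has exponent dividing $q$, so $[E, E] \leq \Omega_1(Z(T)) = Z_0$. If $E$ were abelian it would be a characteristic abelian subgroup of $Q$, hence cyclic of exponent $q$, forcing $E = Z_0$; but then $|T/T^q| = |\Omega_1(T)| = q$ by regularity, $T$ would be cyclic by the Burnside basis theorem, and $Q \leq C_Q(T) = Z(T) = T$ would be abelian, a contradiction. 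Therefore $E$ is nonabelian with $[E, E] = Z_0$, and since $Z(E)$ is itself a characteristic abelian subgroup of $Q$ of exponent $q$, $Z(E) = Z_0 = [E, E]$, so $E$ is extraspecial of exponent $q$.

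For the central product decomposition, take $e \in E$ and $g \in Q$; then $[e, g] \in E$ (as $E$ is characteristic in $Q$) and $[e, g] \in [T, Q] \leq Z(T)$, so $[e, g] \in E \cap Z(T)$, which is abelian of exponent $q$, giving $[e, g] \in \Omega_1(Z(T)) = Z_0 = Z(E)$. Hence $Q$ acts on $E$ by central automorphisms. For an extraspecial group of exponent $q$ the group of central automorphisms coincides with $\mathrm{Inn}(E)$: both have order $q^{2n}$ where $|E| = q^{1+2n}$, and every central automorphism is realized by an inner one via the nondegenerate commutator pairing $E/Z(E) \times E/Z(E) \to Z(E)$. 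Therefore $Q = E \cdot C_Q(E)$ with $E \cap C_Q(E) = Z(E) = Z_0$. Finally, $C_Q(E)$ is characteristic in $Q$, and every characteristic abelian subgroup of $C_Q(E)$ is characteristic in $Q$, hence cyclic; by induction on $|Q|$ applied to $C_Q(E)$, this group is itself a central product of a cyclic group with an extraspecial-of-exponent-$q$ factor, but any nontrivial extraspecial factor in $C_Q(E)$ would combine with $E$ to produce a larger characteristic extraspecial subgroup, contradicting the maximality inherent in the critical subgroup construction. Hence $C_Q(E)$ is cyclic.

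The main obstacle is pinning down the extraspecial structure of $E = \Omega_1(T)$: one combines the cyclicity of $Z(T)$ from the hypothesis with regularity of $T$ (which uses $q \geq 3$ and the class-$\leq 2$ property from Thompson's theorem) to force $[E, E] = Z_0$, and then invokes the self-centralizing condition $C_Q(T) = Z(T)$ to rule out the abelian case of $E$. The second delicate point is the cyclicity of $C_Q(E)$, which rests on the identification of central automorphisms with inner automorphisms of an extraspecial group together with the maximality built into the critical subgroup.
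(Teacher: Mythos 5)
This statement is not proved in the paper at all: the authors simply quote it as \cite[Theorem 5.5.3]{GO} (P.~Hall's classical theorem on $p$-groups with no noncyclic characteristic abelian subgroups), so the only question is whether your argument stands on its own. Your overall strategy (critical subgroup $T$, $E=\Omega_1(T)$ extraspecial of exponent $q$, then $Q=E\cdot C_Q(E)$ via the nondegenerate commutator pairing) is the right one and most of it is sound, but there are two genuine gaps. The first is in the step ruling out $E$ abelian: from ``$T$ cyclic'' you conclude $Q\leq C_Q(T)=Z(T)=T$, but nothing gives you $Q\leq C_Q(T)$; the condition $C_Q(T)=Z(T)$ only says $Q/Z(T)$ embeds in $\mathrm{Aut}(T)$. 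The modular group $\langle a,b : a^{q^2}=b^q=1,\ a^b=a^{1+q}\rangle$ has a cyclic self-centralizing critical subgroup $\langle a\rangle$ and is nonabelian, so your implication fails as stated. What is true is that a cyclic self-centralizing normal $T$ forces $Q$ to be metacyclic, and a noncyclic metacyclic $q$-group for odd $q$ has $\Omega_1(Q)\cong C_q\times C_q$ characteristic abelian noncyclic, contradicting the hypothesis --- but that extra argument (which is where the hypothesis must be invoked again) is missing.

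The second gap is the cyclicity of $C_Q(E)$. Invoking ``the maximality inherent in the critical subgroup construction'' does not work: Thompson's $T$ is maximal in a family defined by the conditions $\Phi(H)\leq Z(H)\geq [H,Q]$, not among extraspecial subgroups, and the subgroup $E\ast E'$ you would build from an extraspecial factor $E'$ of $C_Q(E)$ is neither shown to be characteristic nor shown to violate that maximality (indeed $E'$ need not lie in $T$ at all). Fortunately this step is unnecessary: once the induction gives $C_Q(E)=R\ast E'$ with $R$ cyclic and $E'$ extraspecial of exponent $q$ or trivial, one checks $Z(E')=[C_Q(E),C_Q(E)]=\Omega_1(Z(C_Q(E)))=Z_0=Z(E)$, so $E\ast E'$ is again extraspecial of exponent $q$ and $Q=(E\ast E')\ast R$ is already the desired decomposition; there is no need to prove $C_Q(E)$ itself is cyclic. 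With the first gap repaired and the last paragraph replaced by this absorption argument, your proof would be a legitimate self-contained alternative to citing \cite{GO}.
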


Now we apply Theorem \ref{noncyclic} to non-metacyclic Frobenius groups.

\begin{lemma}\label{submet}
Let $FH$ be a non-metacyclic Frobenius group with complement $H$ and kernel $F$. Then $F$ contains a normal elementary abelian $p$-subgroup  of order $p^2$ for some prime $p$.
\end{lemma}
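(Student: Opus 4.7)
The plan is to isolate a single Sylow subgroup of $F$ and then apply Theorem~\ref{noncyclic}. First, Thompson's theorem guarantees that the Frobenius kernel $F$ is nilpotent, so $F$ is the direct product of its Sylow subgroups. A direct product of metacyclic groups of pairwise coprime orders is itself metacyclic (cyclic normal subgroups and cyclic quotients can be assembled componentwise), so $F$ being non-metacyclic forces some Sylow $p$-subgroup $P$ to be non-metacyclic. Since $P$ is characteristic in $F$, any subgroup of $P$ that is normal in $P$ is automatically normal in $F$, and the task reduces to producing a normal elementary abelian subgroup of order $p^2$ inside $P$.

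I would then split into two cases according to whether $P$ contains a noncyclic characteristic abelian subgroup. In the affirmative case, let $A$ be such a subgroup; then $\Omega_1(A)$ is characteristic in $P$ and elementary abelian of rank at least two. Viewing $\Omega_1(A)$ as a nonzero $\mathbb{F}_p[P]$-module and using the standard fact that a $p$-group acting on a nonzero $\mathbb{F}_p$-vector space has a nontrivial fixed subspace, one constructs, by iterating this fixed-point argument, a $P$-invariant subgroup $V$ of $\Omega_1(A)$ of order exactly $p^2$. This $V$ is elementary abelian, normal in $P$, and therefore normal in $F$.

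In the negative case, Theorem~\ref{noncyclic} (valid for odd $p$) writes $P = Z \ast E$ as the central product of a cyclic group $Z$ and an extra-special group $E$ of exponent $p$. If $E$ were trivial then $P$ would be cyclic and hence metacyclic, contrary to our choice of $P$; so $E \neq 1$ and $|E| \geq p^3$. Choose any $x \in E \setminus Z(E)$ and set $V = \langle x, Z(E) \rangle$. Since $Z(E)$ is central of order $p$ and $x^p = 1$, $V$ is elementary abelian of order $p^2$. Moreover $[E, V] = [E, x] \subseteq [E, E] = Z(E) \subseteq V$, so $V$ is normal in $E$; as $Z \le Z(P)$, conjugation of $V$ by an arbitrary element $ze \in P$ reduces to conjugation by $e$, hence $V$ is normal in $P$ as required.

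The main obstacle is that Theorem~\ref{noncyclic} is stated only for odd primes, so the argument as written handles only a non-metacyclic Sylow subgroup of odd order. This restriction is precisely what is needed for the applications in this paper, since in Theorem~\ref{main3} the kernel $F$ has odd order and therefore every Sylow of $F$ is odd. If one wished to state the lemma uniformly, one could invoke the classical classification of $2$-groups in which every normal abelian subgroup is cyclic (only the cyclic and generalized quaternion groups, both of which are metacyclic) to conclude that a non-metacyclic $2$-Sylow subgroup likewise contains a noncyclic characteristic abelian subgroup, reducing once more to the first case.
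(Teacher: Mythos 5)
Your argument follows essentially the same route as the paper: reduce to a non-metacyclic Sylow $p$-subgroup $P$ of the nilpotent kernel, split on whether $P$ has a noncyclic characteristic abelian subgroup, and invoke Theorem~\ref{noncyclic} in the remaining case. The odd-$p$ part is correct, and your explicit construction of $V=\langle x, Z(E)\rangle$ inside the extraspecial factor usefully expands the paper's terse ``$\Omega_1(F)$ has exponent $p$ and is not cyclic.'' One small repair: the fact that a subgroup normal in $P$ is normal in $F$ does not follow from $P$ being characteristic in $F$ (characteristicity does not transfer normality of subgroups); it follows because $F=P\times O_{p'}(F)$ and the $p'$-part centralizes $P$.

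The genuine gap is the case $p=2$, which you acknowledge but then dismiss as unnecessary for the applications. It is necessary: Lemma~\ref{submet} is invoked at the start of the proof of Theorem~\ref{main2}, where the supersolvable kernel $F$ may have even order; only Theorem~\ref{main3} assumes $|F|$ odd. The paper closes this case with a short argument that exploits the Frobenius hypothesis instead of any classification: if the non-metacyclic Sylow $2$-subgroup $P$ had cyclic centre, then, since the automorphism group of a cyclic $2$-group is a $2$-group and $|H|$ is coprime to $|F|$ (hence odd), $H$ would centralize $Z(P)\neq 1$, contradicting $C_F(h)=1$ for $h\in H^{\#}$; so $Z(P)$ is noncyclic and $\Omega_1(Z(P))$ yields a central, hence normal, elementary abelian subgroup of order $4$. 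Your proposed substitute is moreover misstated: the $2$-groups all of whose normal abelian subgroups are cyclic are not only the cyclic and generalized quaternion groups but also the dihedral and semidihedral $2$-groups of order at least $16$ (for instance, the dihedral group of order $16$ has no noncyclic normal abelian subgroup). Since all of these are metacyclic, your intended conclusion would still follow from the corrected classification, but as written that step is false, and the paper's Frobenius-theoretic argument is both shorter and self-contained.
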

\begin{proof}
Since $F$ is nilpotent we can assume that $F$ is a $p$-group for some prime $p$. If $p=2$, then $Z(F)$ must not be cyclic and the result follows. If $p$ is odd and contains a noncyclic characteristic  abelian subgroup the result also follows. Thus, it remains to prove the case when $F$ contains no noncyclic characteristic  abelian subgroup. In this case by Theorem \ref{noncyclic}, $F$ is a central product of a cyclic group and an extra-special group of exponent $p$. Therefore, $\Omega_1(F)$ has exponent $p$ and is not cyclic. The proof is complete.
\end{proof}

We are now in a position to show Theorem \ref{main1} and Theorem \ref{main3}. As mentioned in the introduction, Theorem \ref{main1} is an immediate consequence of Lemma \ref{C1} and the main results of \cite{Eme1,Eme2}.

Now we prove Theorem \ref{main3}. Let us assume the hypothesis of the theorem. Thus, $FH$ is a Frobenius group with a non-metacyclic kernel $F$ of odd order and complement $H$ acting coprimely on a finite
group $G$ in such a manner that $C_G(H)$ and $C_G(x)$ are nilpotent of class at most $c$ for any $x\in F^{\#}$. We wish to show that $G$ is nilpotent of $(c,|H|)$-bounded class. Since $F$ is nilpotent, it is sufficient to consider the case where $F$ is a $p$-group for some prime $p$. By Lemma \ref{noncyclic} $F$ contains a normal elementary abelian $p$-subgroup $N$ of order $p^2$. Thus, considering the Frobenius group $NH$ and applying \cite[Theorem 1.5]{AS} we obtain that there exists a constant $P = P(c, |H|)$ depending only on $c$ and $|H|$ such that if $p\geq P$,  then $G$ is nilpotent with class bounded solely in terms of $c$ and $|H|$. On the other hand, if $3<p<P$, then we can apply Theorem \ref{main1} on $F$ to obtain that $G$ is nilpotent with class bounded solely in terms of $c$ and $|H|$. Now, assume that $p=3$. We will prove that the rank of $Z(F)$ is at least 3.  If the rank of $Z(F)$ is less than 3, we consider a subgroup of $Z(F)$ isomorphic to $C_3$ or $C_3\times C_3$ and then since $H$ acts on such subgroup we must have that $H$ contains an element of order 2. Thus, in this case $F$ is an abelian group of rank at most 2 which means that $F$ is metacyclic that is a contradiction. Therefore, $Z(F)$ contains an elementary abelian group of rank at most 3 and the result follows by \cite{Eme1} or \cite{War,Sh}.

\section{Bounding nilpotency class of Lie algebras}

We start this section with a criteria for nilpotency of graded Lie algebras. 
Let $p$ be a prime number and $L$ be a $\mathbb{Z}/p\mathbb{Z}$-graded 
Lie algebra. If $J, Y,J_1,\ldots, J_s$ are subsets of $L$ we use $[J,Y]$ to denote the subspace 
of $L$ spanned by the set $\{[j,y] \ ; \ j\in J,y\in Y\}$ and for $i \geq 2$ we write 
$[J_1,\ldots,J_i]$ for $[[J_1,\ldots,J_{i-1}],J_i]$.

Assume that there exist non-negative integers $u$ and $v$ such that 
\begin{equation}\label{eqq1}
[L,\underbrace{L_0,\ldots,L_0}_u]=0
\end{equation}
\noindent and
\begin{equation}\label{eqq2}
[[L,L] \cap L_0,\underbrace{L_a,\ldots,L_a}_{v}]=0, \ \mbox{for all} \ a \in \mathbb{Z}/p\mathbb{Z}.
\end{equation}

In \cite{CalMel1} we showed that conditions (\ref{eqq1}) and (\ref{eqq2}) together are 
sufficient to conclude that $L$ is nilpotent of $(p,u,v)$-bounded class. Here we repeat some arguments 
for the reader's convenience.

By \cite[Theorem 1]{Khu}, condition (\ref{eqq1}) implies that $L$ is solvable with $(p,u)$-bounded 
derived length. Thus, we can use induction on the derived 
length of $L$. If $L$ is abelian, there is nothing to prove. Assume that $L$ is metabelian. 
In this case, $[x, y, z] = [x, z, y]$, for every $x \in [L, L]$ and $y,z \in L$.

For each $b \in \mathbb{Z}/p\mathbb{Z}$ we denote $[L,L] \cap L_b$ by $L_b'$. 
By \cite[Proposition 2]{KhuShu} it is sufficient to prove that there exists a $(p,u,v)$-bounded number $t$ such that

$$[L_b',\underbrace{L_a,\ldots,L_a}_{t}]=0, \ \mbox{for all} \ a,b \in \mathbb{Z}/p\mathbb{Z}.$$

If $b=0$, this follows from (\ref{eqq2}) with $t=v$. Suppose that $b \neq 0$.
If $a=0$, the commutator is zero from (\ref{eqq1}) with $t=u$. In the case where $a \neq 0$ 
we can find a positive integer $s < p$ such that $b + sa = 0 \ (\textrm{mod }p)$. Therefore,
we have $[L_b',\underbrace{L_a,\ldots,L_a}_{v + s}] \subseteq [L_0',\underbrace{L_a,\ldots,L_a}_v]$. 
We know that the latter commutator is 0 by (\ref{eqq2}). 


In what follows $FH$ denotes a finite supersolvable Frobenius group with kernel $F$ 
and complement $H$. Moreover, we assume that $F$ is an elementary abelian $p$-group of rank 2 for some prime $p$.

Let $R$ be an associative ring with unity. Assume that the characteristic 
of $R$ is coprime with $p$ and the additive group of $R$ is finite (or locally finite). 
Let $L$ be a Lie algebra over $R$. 
The main goal of this section is to prove the following theorem.

\begin{theorem}\label{Liealgthe}
Suppose that $FH$ acts by automorphisms on $L$ in such a way that $C_{L}(H)$ is nilpotent of class $c$ and $C_L(x)$ is nilpotent of class $d$ for any $x \in F^{\#}$. Then $L$ is nilpotent of $(c,d,|FH|)$-bounded class.
\end{theorem}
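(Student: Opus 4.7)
The plan is to apply the criterion for nilpotency of $\mathbb{Z}/p\mathbb{Z}$-graded Lie algebras given by conditions (\ref{eqq1})--(\ref{eqq2}) at the beginning of this section. The supersolvability of $FH$ provides an $H$-invariant subgroup $X=\langle x\rangle$ of $F$ of order $p$. After extending $R$ by a primitive $p$-th root of unity (possible because $\mathrm{char}(R)$ is coprime with $p$ and the additive group of $L$ is locally finite), $L$ decomposes into $X$-weight spaces $L=\bigoplus_{i\in\mathbb{Z}/p\mathbb{Z}}L_i$, with $L_0=C_L(x)$ nilpotent of class at most $d$ by hypothesis. Since $H$ acts freely on $X^\#$, the trace map $T\colon L_i\to C_L(H)$, $v\mapsto\sum_{h\in H}h(v)$, is injective for $i\neq 0$, and its image sits in $C_L(H)$, which is nilpotent of class $c$.

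First I would verify (\ref{eqq1}). For $y\in C_{L_0}(H)$ and $v\in L_i$ with $i\neq 0$, the $H$-invariance of $y$ yields $T([v,y,\ldots,y])=[T(v),y,\ldots,y]\in\gamma_{c+1}(C_L(H))=0$ with $c$ copies of $y$; injectivity of $T$ forces $[v,y,\ldots,y]=0$. Thus $\mathrm{ad}(y)^c$ annihilates every nonzero weight space, while $\mathrm{ad}(y)^d$ annihilates $L_0$ by its class-$d$ nilpotency, so $\mathrm{ad}(y)^{c+d}=0$ on $L$. To lift this from $C_{L_0}(H)$ to all of $L_0$, I would work layer-by-layer along the lower central series of $L_0$, using that each abelian quotient $\gamma_i(L_0)/\gamma_{i+1}(L_0)$ carries a compatible $H$-action linked to $C_{L_0}(H)$ by the trace; an Engel-type assembly then packages these nilpotent inner derivations into $[L,\underbrace{L_0,\ldots,L_0}_{u}]=0$ for some $u=u(c,d,|H|)$.

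Next I would verify (\ref{eqq2}). Pick $y\in F\setminus X$, so $F=\langle x,y\rangle$ and $y$ preserves the $X$-grading, refining it to $L=\bigoplus_{i,j}L_{i,j}$. For each pair $(i,j)\neq(0,0)$ there is an element $g_{i,j}\in F^\#$ centralizing $L_{i,j}$, hence $L_{i,j}\subseteq C_L(g_{i,j})$, a subalgebra of class at most $d$. Fix $a\neq 0$. For $w\in[L,L]\cap L_0$ and $u_1,\ldots,u_v\in L_a$, decomposing each $u_s$ along $y$-weights and using the Jacobi identity converts $[w,u_1,\ldots,u_v]$ into a sum of iterated brackets; a pigeonhole argument, given that there are only $p$ possible $y$-weights, ensures that once $v$ exceeds a $(d,p)$-bounded constant every summand contains $d+1$ consecutive entries from a common centralizer $C_L(g_{i,j})$ and therefore vanishes. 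The case $a=0$ of (\ref{eqq2}) is immediate from $[L,L]\cap L_0\subseteq L_0$ and the class-$d$ nilpotency of $L_0$.

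With (\ref{eqq1}) and (\ref{eqq2}) verified for $(c,d,|FH|)$-bounded parameters, the criterion from \cite{CalMel1} concludes that $L$ is nilpotent of $(p,u,v)$-bounded class, which is $(c,d,|FH|)$-bounded. The main obstacle is the bootstrapping in the second paragraph: upgrading the uniform nilpotence of $\mathrm{ad}$-action by elements of the small subalgebra $C_{L_0}(H)$ to the same conclusion for all of $L_0$. The difficulty is sharpened by the fact that coprimality between $|H|$ and $\mathrm{char}(R)$ is not stipulated in the Lie-ring setup, so averaging over $H$ is not directly available; one must use the Frobenius structure of $XH$ together with the internal nilpotency of $L_0$ in a genuinely Lie-theoretic way.
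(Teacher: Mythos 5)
Your setup (the $\mathbb{Z}/p\mathbb{Z}$-grading by a normal subgroup $Z=\langle x\rangle\triangleleft FH$ of order $p$, the reduction to conditions (\ref{eqq1})--(\ref{eqq2}), and the trace argument $T(v)=\sum_{h}v^h$ being injective on $L_i$ for $i\neq 0$ because $H$ permutes the nonzero components in regular orbits) matches the paper. But there are two genuine gaps, and both are filled in the paper by one tool you never invoke: the decomposition $L_0=C_L(F)+\sum_{f\in F}C_L(ZH^f)$, obtained from the Khukhro--Makarenko--Shumyatsky fixed-point lemma (Lemma \ref{lemma dec Frb}) applied to the Frobenius group $(F/Z)H$ acting on $[L_0,F]$, where $F/Z$ acts fixed-point-freely. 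Your trace argument controls only $C_{L_0}(H)=C_L(ZH)$, and you correctly flag that lifting this to all of $L_0$ is the main obstacle; the proposed ``layer-by-layer along the lower central series of $L_0$ linked by the trace'' does not work, precisely because (as you note) averaging over $H$ is unavailable and the trace $L_0\to C_{L_0}(H)$ has no injectivity on the $H$-invariant component $L_0$. The actual fix is: (i) $[L,\underbrace{C_L(F),\dots,C_L(F)}_d]=0$ since $L=\sum_{x\in F^{\#}}C_L(x)$ and $C_L(F)\leq C_L(x)$; (ii) $[L_b,\underbrace{V_f,\dots,V_f}_c]=0$ by your trace argument run with the conjugate complement $H^f$ (note $C_L(H^f)$ also has class $c$); (iii) a combinatorial induction on the nilpotency class of $L_0$, permuting entries modulo $[S,S]$, to assemble (i) and (ii) into condition (\ref{eqq1}). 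Without the covering of $L_0$ by $C_L(F)$ and the $V_f$, there is no handle on a general element of $L_0$.

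The second gap is in your verification of (\ref{eqq2}), which you present as complete but which fails. Refining the grading by $y\in F\setminus X$, take $w\in L_{0,j_0}$ with $j_0\neq 0$ and entries in $L_{a,j}$ with $a\neq 0$. The subgroup of $F$ centralizing $L_{0,j_0}$ is $\langle x\rangle$, while any $g\in F^{\#}$ centralizing $L_{a,j}$ satisfies a relation forcing $g\notin\langle x\rangle$; so no $g\in F^{\#}$ centralizes both, and your ``$d+1$ consecutive entries in a common $C_L(g_{i,j})$'' cannot include the head $w$. Since the bracket is left-normed with $w$ first (and in the metabelian case only the entries after the head can be permuted), having $d+1$ of the $u$-entries alone in $C_L(g)$ does not produce an element of $\gamma_{d+1}(C_L(g))$, so nothing vanishes. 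Your pigeonhole does work for the piece $[L,L]\cap C_L(F)$, which is exactly how the paper handles $L_F'$ via $L_a=\sum_{y\in F\setminus Z}C_{L_a}(y)$. For the remaining pieces $[L,L]\cap V_f$ the paper again uses the conjugate Frobenius groups $ZH^f$: the trace argument gives $[V_f',\underbrace{T,\dots,T}_c]=0$ for the $H^f$-fixed subspace $T$, and the Vandermonde-type Lemma \ref{auxlemma} shows $L_a\subseteq\sum_{j}T^{\varphi^j}$, whence $v=(c-1)q+1$ works. A telltale sign of the gap is that your argument for (\ref{eqq2}) never uses the hypothesis on $C_L(H)$, whereas the true bound there must involve $c$.
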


Let $Z$ be a subgroup of prime order $p$ of $F$ such that $Z \triangleleft FH$ and let $\varphi$ be a generator of $Z$.

Now, let $\omega$ be a primitive $p$th root of unity. We extend the ground ring of $L$ by 
$\omega$ and denote by $\tilde{L}$ the algebra $L\otimes_{\mathbb{Z}}\mathbb{Z}[\omega]$. 
The action of $FH$ on $L$ extends naturally to $\tilde{L}$. Note that $C_L(x)$ is nilpotent of class $d$ for any $x \in F^{\#}$ and 
$C_{\tilde{L}}(H)$ is nilpotent of class $c$. Since $L$ and $\tilde{L}$ have the same nilpotency class, 
it is sufficient to bound the class of $\tilde{L}$. Hence, without loss of generality 
it will be assumed that the ground ring contains $\omega$ so that we will work with $L$ 
rather than $\tilde{L}$.

For each $i=0,\ldots,p-1$ we denote by $L_i$ the  $\varphi$-eigenspace corresponding 
to eigenvalue $\omega^i$, that is,  $L_i=\{x \in L \ ; \  x^{\varphi}=\omega^ix\}$. 
We have  $$L=\bigoplus_{i=0}^{p-1}L_i \ \textrm{ and } \ [L_i,L_j]\subseteq L_{i+j(\textrm{mod} \ p)}. $$

Note that  $L_0=C_L(Z)$. It is easy to see that $FH/Z$ acts on $L_0$ and $C_{L_0}(F/Z)=C_{L}(F)$. Also, note that $L_0$ is 
nilpotent of class $d$ by hypothesis.

A proof of the next lemma can be found in \cite[Lemma 2.4]{KhuMakShu}. 
It will be useful to decompose the subalgebra $L_0$ into $C_L(F)$ and $C_L(ZH^f)$, where $f\in F$.

\begin{lemma}\label{lemma dec Frb}
Suppose that a finite group $N$ admits a Frobenius group of automorphisms $KB$ with 
kernel $K$ and complement $B$ such that $C_N(K)=1$. Then $N=\langle C_N(B)^y ; \ y \in K \rangle$.
\end{lemma}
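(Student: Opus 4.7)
The plan is to set $N_0 := \langle C_N(B)^y : y \in K \rangle$ and prove $N = N_0$, working under the coprime-action hypothesis that is standing in this section (the characteristic of the ground ring is prime to $|FH|$).

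The first step is to verify that $N_0$ is $KB$-invariant. The $K$-invariance is immediate from the identity $(C_N(B)^y)^z = C_N(B)^{yz}$ for $z \in K$. For $B$-invariance, take $n \in C_N(B)$ and $b \in B$; using $n^b = n$, the identity $yb = b(b^{-1}yb)$, and the normality of $K$ in $KB$, one computes
\[
(n^y)^b = (n^b)^{b^{-1}yb} = n^{b^{-1}yb} \in C_N(B)^{b^{-1}yb},
\]
so $B$ permutes the generating family. The normality of $N_0$ in $N$ itself, needed for the next step, can be established by an induction on $|N|$ using a minimal $KB$-invariant normal subgroup of $N$ and the fact that $C_M(K) \subseteq C_N(K) = 1$ for every such $M$.

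Next, I would pass to the quotient $\bar N := N/N_0$, on which $KB$ still acts, and apply part (iv) of the preliminary coprime-action lemma to obtain
\[
C_{\bar N}(K) = C_N(K)N_0/N_0 = 1 \quad\text{and}\quad C_{\bar N}(B) = C_N(B)N_0/N_0 = 1,
\]
using $C_N(K)=1$ by hypothesis and $C_N(B) \subseteq N_0$ (take $y=1$).

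The main obstacle is the final step: deducing $\bar N = 1$ from the simultaneous vanishing of these two centralizers. This relies on the classical structural fact that a Frobenius group $KB$ acting coprimely on a nontrivial finite group $M$ with $C_M(K) = 1$ necessarily has $C_M(B) \neq 1$. A standard proof reduces to the abelian case via passage to a chief $KB$-factor and then applies Clifford theory to view $M$ as a semisimple $\mathbb{F}_p[KB]$-module; the $B$-orbit structure on the irreducible $K$-constituents, coupled with the Frobenius condition $C_K(b)=1$ for $b \in B^{\#}$, then produces a nonzero $B$-fixed element. Applied contrapositively to $\bar N$, this forces $\bar N = 1$, whence $N = N_0$ as desired.
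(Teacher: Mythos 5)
The paper does not actually prove this lemma; it is quoted from \cite[Lemma 2.4]{KhuMakShu}. Your strategy is the standard one, and the representation-theoretic fact you invoke at the end (a Frobenius group $KB$ acting coprimely on $M\neq 1$ with $C_M(K)=1$ has $C_M(B)\neq 1$) is correct. Nevertheless there is a genuine gap at the hinge of the argument: to form $\bar N=N/N_0$ you need $N_0=\langle C_N(B)^y :\ y\in K\rangle$ to be \emph{normal in $N$}, and $KB$-invariance does not give this. Your remark that normality ``can be established by an induction on $|N|$'' is not a proof, and no independent justification is available short of proving the lemma itself: the reason $N_0$ is normal is that $N_0=N$, which is the statement being proved. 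The standard repair is to run the induction on $|N|$ for the lemma rather than for normality: take a minimal $KB$-invariant normal subgroup $M$, apply induction to $N/M$ to get $N=N_0M$, and apply induction to $M$ itself (where $C_M(K)=1$ and $\langle C_M(B)^y\rangle\le N_0$) to get $M\le N_0$. The base case is a minimal, hence elementary abelian, $N$ --- a module, where subgroups are automatically normal and your quotient argument combined with the Clifford-theoretic count is legitimate.

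A second, smaller issue: the lemma as stated assumes only $C_N(K)=1$, not that $|B|$ is coprime to $|N|$, and the standing hypothesis in the section where it is applied is only that the characteristic of $R$ is coprime to $p=|K|$, not to $|H|$. Your proof uses coprimeness of $B$ twice: in invoking part (iv) of the preliminary lemma to get $C_{\bar N}(B)=C_N(B)N_0/N_0$, and in the Maschke/Clifford step. The cited source proves the statement without any coprimeness on the complement, at the cost of heavier machinery (solubility of $N$ from $C_N(K)=1$ and the order formula $|C_N(B)|=|N|^{1/|B|}$). As written, your argument establishes a weaker statement than the one the paper relies on, and this should at least be flagged.
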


In the above lemma we can write $N=\langle C_N(B^y) ; \ y \in K \rangle$, 
since $C_N(B)^y=C_N(B^y)$.

For any $f \in F$ we denote by $V_f$ the subalgebra $C_L(ZH^f)\leq L_0$. 
Note that $ZH^f$ is also a Frobenius group and that $ZH^f=ZH$ whenever $f \in Z$. 

\begin{lemma}\label{lemma dec L0}
We have $L_0 = C_L(F)+\sum_{f\in F} V_f$.
\end{lemma}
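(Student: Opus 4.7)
My plan is to factor $C_L(F)$ out of $L_0$, apply Lemma~\ref{lemma dec Frb} to the resulting quotient under the induced Frobenius action of $\bar F\bar H := (F/Z)(HZ/Z)$, and then lift each summand back to $L_0$ by coprime averaging.

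Concretely, set $N = L_0/C_L(F)$. I would first verify that $\bar F \bar H$ really is a Frobenius group with kernel $\bar F$ and complement $\bar H$, which reduces to showing that $\bar H$ acts fixed-point-freely on $\bar F$. Since $Z$ is $FH$-invariant and $FH$ is Frobenius with kernel $F$, for every $h \in H \setminus \{1\}$ the map $x \mapsto [x,h]$ is a bijection of the abelian group $F$ (its kernel is $C_F(h) = 1$); as it sends $Z$ into $Z$, it induces a bijection on $F/Z$, yielding the desired fixed-point-freeness. Moreover, $L_0 = C_L(Z)$ forces $C_{L_0}(F/Z) = C_{L_0}(F) = C_L(F)$, so $C_N(\bar F) = 0$, and Lemma~\ref{lemma dec Frb} applies.

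That lemma then yields $N = \sum_{\bar f \in \bar F} C_N(\bar H^{\bar f})$, so lifting gives $L_0 = C_L(F) + \sum_{f \in F} W_f$ with $W_f = \{ x \in L_0 : [x, H^f] \subseteq C_L(F) \}$ (we have used that $Z$ acts trivially on $L_0$, so the $Z$-part of $H^fZ$ imposes no extra condition). To finish, I would lift each $W_f$ into $V_f + C_L(F)$ by averaging: for $x \in W_f$, the element $z = |H|^{-1} \sum_{h \in H^f} x^h$ belongs to $C_{L_0}(H^f) = C_L(Z H^f) = V_f$, while $x - z = |H|^{-1} \sum_{h \in H^f} (x - x^h) \in C_L(F)$, since each summand lies in $C_L(F)$. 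Hence $W_f \subseteq V_f + C_L(F)$, and summing over $f$ gives the claimed decomposition $L_0 = C_L(F) + \sum_{f \in F} V_f$.

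The main obstacle is the verification of the Frobenius structure on $\bar F \bar H$---without it, Lemma~\ref{lemma dec Frb} cannot be invoked. The coprime-averaging step also quietly uses invertibility of $|H|$ in the ground ring $R$, which is consistent with the coprime-action framework of this section.
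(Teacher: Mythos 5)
Your proof is correct in substance and rests on the same key ingredient as the paper's, namely Lemma~\ref{lemma dec Frb} applied to a piece of $L_0$ on which $F/Z$ acts without nonzero fixed points; the difference is purely in how that piece is produced. You pass to the quotient $N = L_0/C_L(F)$ and then lift the summands $C_N(\bar H^{\bar f})$ back into $L_0$ by averaging over $H^f$. The paper instead uses the additive coprime decomposition $L_0 = C_{L_0}(F) + [L_0,F]$ (valid because $|F/Z|=p$ is invertible, as the characteristic of $R$ is coprime to $p$) and applies Lemma~\ref{lemma dec Frb} directly to the subalgebra $[L_0,F]$, on which $C_{[L_0,F]}(F/Z)=0$; then $C_{[L_0,F]}(\bar H^{\bar f}) \subseteq C_{L_0}(H^f) = C_L(ZH^f) = V_f$ with no lifting step at all. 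Your verification that $(F/Z)(HZ/Z)$ is Frobenius is correct and is indeed a point the paper leaves implicit.

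The one place where your route costs something real is the averaging step $z = |H|^{-1}\sum_{h\in H^f} x^h$: this requires $|H|$ to be invertible in $R$, and the section's standing hypotheses only guarantee that the characteristic of $R$ is coprime to $p$, not to $|H|$. (In the eventual application in Section~4 the action of $FH$ on $G$ is coprime, so $|H|$ is invertible on the additive group of $L(G)$ and your argument would go through there, but as a proof of Theorem~\ref{Liealgthe} in the stated generality this is an extra assumption.) The paper's variant avoids the issue entirely because it only ever inverts $p$. If you replace your quotient $L_0/C_L(F)$ by the $F$-invariant complement $[L_0,F]$ of $C_{L_0}(F)$ inside $L_0$, the lifting problem---and hence the need for $|H|^{-1}$---disappears.
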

\begin{proof}
The subalgebra $L_0$ is $FH$-invariant and $L_0=C_{L_0}(F)+[L_0,F]$. So it admits the natural action by the group $FH/Z$. 
Moreover, $F/Z$ acts fixed-point-freely on $[L_0,F]$. Since the additive group of $L$ is finite, 
it is immediate from Lemma \ref{lemma dec Frb} that 
$[L_0,F] =\langle C_{L_0}(H^{\tilde{f}}) ; \ \tilde{f} \in F/Z \rangle$. As a 
consequence we have that $[L_0,F] =\langle C_{L}(ZH^f) ; \ f \in F \rangle$ and then $L_0 = C_L(F)+\sum_{f\in F} V_f$.
\end{proof}

\begin{lemma}\label{lemma dec L}
We have $[L,\underbrace{C_L(F),\ldots,C_L(F)}_d]=0$.
\end{lemma}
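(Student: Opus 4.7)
The plan is to exploit the assumption that $F$ is elementary abelian of rank $2$ by refining the $\varphi$-eigenspace decomposition using a second generator. Pick $f \in F$ so that $F = \langle \varphi \rangle \times \langle f \rangle$, and for $(i,j) \in (\mathbb{Z}/p\mathbb{Z})^2$ set
\[
L_{i,j} = \{x \in L \ ; \ x^{\varphi} = \omega^i x, \ x^f = \omega^j x\}.
\]
Since $\varphi$ and $f$ commute, each has order $p$, the characteristic of the ground ring is coprime to $p$, and $\omega$ lies in the ground ring, we obtain $L = \bigoplus_{i,j} L_{i,j}$ (refining $L_i = \bigoplus_j L_{i,j}$). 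By linearity and the Leibniz rule it then suffices to show, for every pair $(i,j)$, that
\[
[L_{i,j},\underbrace{C_L(F),\ldots,C_L(F)}_d] = 0.
\]

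For $(i,j)=(0,0)$ one has $L_{0,0} = C_L(\langle \varphi, f\rangle) = C_L(F)$. Since $C_L(F) \subseteq C_L(\varphi) = L_0$, which is nilpotent of class $d$ by hypothesis, $C_L(F)$ is itself nilpotent of class at most $d$, and the displayed commutator is simply $\gamma_{d+1}(C_L(F)) = 0$. For $(i,j) \neq (0,0)$, set $z = \varphi^j f^{-i}$, a nontrivial element of $F$. A direct computation gives $x^z = \omega^{ij-ji}x = x$ for every $x \in L_{i,j}$, so $L_{i,j} \subseteq C_L(z)$, and trivially $C_L(F) \subseteq C_L(z)$. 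Hence every entry in $[x,y_1,\ldots,y_d]$ with $x \in L_{i,j}$ and $y_1,\ldots,y_d \in C_L(F)$ already lies inside the Lie subalgebra $C_L(z)$, which is nilpotent of class at most $d$ by hypothesis, so the $(d+1)$-fold commutator vanishes.

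The main (and really only) obstacle is recognizing that rank $2$ is exactly what is needed: it is precisely this that allows us, for each nonzero joint eigenspace $L_{i,j}$, to exhibit a nontrivial element $z \in F$ whose centralizer contains both $L_{i,j}$ and $C_L(F)$. Once that bookkeeping is set up, no Jacobi manipulation or induction is required, and summing the vanishing commutators over all pairs $(i,j)$ yields $[L,\underbrace{C_L(F),\ldots,C_L(F)}_d]=0$.
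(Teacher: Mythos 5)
Your proof is correct and is essentially the paper's argument: the paper simply invokes $L=\sum_{x\in F^{\#}}C_L(x)$ (valid since $F$ is elementary abelian of rank $2$) and notes that each $[C_L(x),C_L(F),\ldots,C_L(F)]$ vanishes because $C_L(F)\leq C_L(x)$ and $C_L(x)$ has class $d$. Your joint-eigenspace decomposition $L=\bigoplus_{i,j}L_{i,j}$ with $L_{i,j}\subseteq C_L(\varphi^jf^{-i})$ is just the explicit verification of that decomposition, so the two proofs coincide in substance.
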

\begin{proof}
Since $F$ is an elementary abelian $p$-group, 
it is immediate that $L=\sum_{x \in F^{\#}} C_L(x)$. 
Thus, taking into account that all centralizers $C_L(x)$ is nilpotent of class $d$ 
for any $x \in F^{\#}$ and $C_L(F) \leq C_L(x)$ whenever $x \in F^{\#}$, 
we conclude 
$$[L,\underbrace{C_L(F),\ldots,C_L(F)}_d] = \sum_{x\in F^{\#}}[C_L(x),\underbrace{C_L(F),\ldots,C_L(F)}_d] =0.$$
\end{proof}

It is clear that any conjugated $H^f$ of $H$ can be considered as a Frobenius 
complement of $FH$. Now we describe the action $H^f$ 
on the homogeneous components $L_i$.  Since $H$ is cyclic, we can choose 
a generator $h$ of $H$ and $r \in \{1,2,\ldots,p-1\}$ such that $\varphi^{h^{-1}} = \varphi^r$. 
Then $r$ is a primitive $q$th root of 1 in 
$\mathbb{Z}/p\mathbb{Z}$ (in this section $q$ denotes the order of $H$ and it is not necessarily a prime number). 
The group $H$ permutes the homogeneous components $L_i$ as follows: 
$L_i^h = L_{ri}$ for all $i \in \mathbb{Z}/p\mathbb{Z}$. Indeed, if 
$x_i \in L_i$, then $(x_i^h)^\varphi = x_i^{h\varphi h^{-1}h} = (x_i^{\varphi^r})^h = 
\omega^{ri}x_i^h$. On the other hand, since $F$ commutes with $\varphi$, we also 
have $L_i^{h^f} = L_{ri}$ for all 
$i \in \mathbb{Z}/p\mathbb{Z}$ (because if $x_i \in L_i$, then 
$(x_i^{h^f})^\varphi = \omega^{ri}x_i^{h^f}$). Thus, the action of $H^f$ 
on the components $L_i$ coincides with the action of $H$.

To lighten the notation we establish the following convention.

\begin{remark}[Index Convention]
In what follows, for a given $u_s \in L_s$ we denote both $u_s^{h^i}$ and 
$u_s^{(h^f)^i}$ by $u_{r^is}$, 
since $L_s^{h^i} = L_s^{(h^f)^i}=L_{r^is}$. Therefore, we can write 
$u_s + u_{rs} + \cdots + u_{r^{q-1}s}$ to mean a fixed-point of $H^f$ for any $f\in F$.
\end{remark}

\noindent\textbf{Proof of Theorem \ref{Liealgthe}} \\

We use the arguments presented at the beginning of this section.

\begin{lemma}\label{L_0}
There exists a $(c,d,|FH|)$-bounded number $u$ such that $[L,\underbrace{L_0,\ldots,L_0}_u]=0$.
\end{lemma}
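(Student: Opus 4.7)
\noindent\emph{Plan.} The goal is to produce a $(c,d,|FH|)$-bounded $u$ with $[L,\underbrace{L_0,\ldots,L_0}_u]=0$. I would start from the decomposition $L_0 = C_L(F) + \sum_{f \in F} V_f$ (Lemma \ref{lemma dec L0}) and rely on (i) the identity $[L,\underbrace{C_L(F),\ldots,C_L(F)}_d]=0$ from Lemma \ref{lemma dec L}, together with (ii) the fact that each $V_f$ is a Lie subalgebra of $L$ nilpotent of class at most $c$. Fact (ii) holds because $V_f = C_L(ZH^f)\subseteq C_L(H^f)$ and $C_L(H^f) = f(C_L(H))$ is the image under the automorphism $f \in F$ of the class-$c$ nilpotent subalgebra $C_L(H)$. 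A third observation is that $L_0 \subseteq C_L(\varphi)$ is itself nilpotent of class at most $d$.

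By the multilinearity of the Lie bracket, any left-normed commutator $[w, y_1, \ldots, y_u]$ with $y_i \in L_0$ expands as a sum of commutators $[w, z_1, \ldots, z_u]$ in which each $z_j$ lies in one of the summands $C_L(F)$ (call this ``type $\ast$'') or $V_f$ (``type $f$''). Each summand thus corresponds to a color word $T_1\cdots T_u$ in the alphabet $\{\ast\}\cup F$ of size at most $|F|+1 \leq p^2+1$. If $d$ consecutive positions carry type $\ast$, then the summand vanishes by (i) applied to the outer factor $[w, z_1, \ldots, z_{k-1}]\in L$. For a run $T_{k+1}=\cdots=T_{k+s}=f$ of a single type, I would use repeated Jacobi expansions to rewrite $[\,\cdot\,, v_1, \ldots, v_s]$ (with $v_i \in V_f$) as a combination of commutators whose inner entries are iterated brackets $u_j$ in the $k_j$-th term of the lower central series of $V_f$, with $\sum k_j = s$; since $V_f$ is of class $c$, every composition containing some $k_j \geq c+1$ is annihilated, forcing $r \geq s/c$ in all surviving terms. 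Combined with the class-$d$ nilpotency of $L_0$ and the $F$-action that permutes the $V_f$'s, this is designed to guarantee that any single-color run of length exceeding some $(c,d,|F|)$-bounded $s_0$ forces the summand to vanish.

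A pigeonhole argument on words of length $u$ over $|F|+1$ colors then closes the proof: for $u > (d + s_0)(|F|+1)$, every color word of length $u$ must contain either a length-$d$ run of $\ast$'s or a length-$s_0$ run of some $f \in F$, and in either case the corresponding summand is zero. Hence $[L, L_0, \ldots, L_0] = 0$ for such $u$, which is bounded in terms of $c$, $d$, and $|F|$, hence in terms of $c$, $d$, $|FH|$. The principal obstacle is the rigorous treatment of the single-color $V_f$-run: because the outer factor $[w, z_1, \ldots, z_k]$ does not itself lie in $V_f$, the nilpotency of $V_f$ alone does not kill the inner bracket, and one must genuinely leverage both the embedding $V_f \subseteq L_0$ (which is of class $d$) and the $F$-conjugacy of the $V_f$'s to make the induction on $s$ terminate.
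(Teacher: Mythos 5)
There are two genuine gaps. First, the combinatorial closing step is false as stated: a word of length $u$ over $|F|+1$ colors need not contain a long consecutive run of any single color, no matter how large $u$ is (the alternating word $\ast f \ast f \cdots$ has no run of length $2$ of either color). So the claim that for $u > (d+s_0)(|F|+1)$ every color word contains a length-$d$ run of $\ast$'s or a length-$s_0$ run of some $f$ does not hold. The paper circumvents this by observing that in a left-normed commutator the entries from an $FH$-invariant subalgebra $S\leq L_0$ may be permuted modulo $[L_b,[S,S]]$ (a consequence of the Jacobi identity), so only the \emph{total} number of occurrences of each color matters: with $l=d+(c-1)|F|$ entries, some color occurs often enough that, after reordering, one gets either $d$ consecutive copies of $C_L(F)$ or $c$ consecutive copies of some $V_f$, whence $W\leq [L_b,[S,S]]$; an induction on the nilpotency class of $S$ then absorbs the error terms. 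Your scheme has no mechanism for handling non-consecutive occurrences, and no induction that could absorb the commutator corrections produced by reordering.

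Second, and more fundamentally, you never establish the key vanishing $[L_b,\underbrace{V_f,\ldots,V_f}_{c}]=0$ for $b\neq 0$, and the ingredients you list cannot produce it: nilpotency of $V_f$ (class at most $c$) says nothing about a bracket whose leftmost entry lies outside $V_f$ (a point you concede yourself), and nilpotency of $L_0$ (class at most $d$) is equally irrelevant since $L_b\not\subseteq L_0$ for $b\neq 0$. This step is where the Frobenius structure actually enters the paper's proof: an element $u_b\in L_b$ is completed to the $H^f$-fixed point $u_b+u_{rb}+\cdots+u_{r^{q-1}b}\in C_L(H^f)$ (Index Convention); since $V_f\subseteq C_L(H^f)$ and $C_L(H^f)$ is nilpotent of class $c$, bracketing this sum with $c$ entries from $V_f$ gives zero; and because the summands $[u_{r^ib},V_f,\ldots,V_f]$ lie in pairwise distinct homogeneous components $L_{r^ib}$ (here $b\neq 0$ is essential), each summand vanishes separately, yielding $[L_b,\underbrace{V_f,\ldots,V_f}_{c}]=0$. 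Your proposal never invokes the action of $H$ on the eigenspace decomposition, and the Jacobi-expansion induction on the run length $s$ that you sketch, ``leveraging $V_f\subseteq L_0$ and $F$-conjugacy,'' does not supply a substitute: the surviving terms after such expansions still have their entries in $V_f$ and their leftmost factor outside $L_0$, so no hypothesis applies to kill them.
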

\begin{proof} It suffices to prove that $[L_b,\underbrace{L_0,\ldots,L_0}_u]=0$ for any
$b \in \{0,1,\ldots, p-1\}$.

Taking into account that $L_0$ is nilpotent of class $d$, we may assume $b \neq 0$.

By Lemma \ref{lemma dec L0}, $L_0 =  C_L(F) + \sum_f V_f$.
We now by Lemma \ref{lemma dec L}, that
$[L,\underbrace{C_L(F),\ldots,C_L(F)}_d]=0$. Now let $u_b \in L_b$. 
Since $u_b + u_{rb} + \cdots + u_{r^{q-1}b} \in C_L(H^f)$ and 
$V_f \subseteq C_L(H^f)$, we have
$$[u_b + u_{rb} + \cdots + u_{r^{q-1}b}, \underbrace{V_f,\ldots,V_f}_c] = 0.$$
Thus, $\sum_{i=0}^{q-1} [u_{r^ib}, \underbrace{V_f,\ldots,V_f}_c] = 0$. 
On the other hand, $[u_{r^ib}, \underbrace{V_f,\ldots,V_f}_c] \in L_{r^ib}$ 
and $L_{r^ib}\neq L_{r^jb}$ whenever $i \neq j$. Therefore, we obtain $[L_b, \underbrace{V_f,\ldots,V_f}_c] = 0$.

Let $S$ be an $FH$-invariant subalgebra of $L_0$. Now let $S_F = S \cap C_L(F)$ and 
let $S_f = S \cap V_f$ for $f \in F$. 
It follows that $S = S_F + \sum_f S_f$. We will show that there exists a $(c,d,|FH|)$-bounded 
number $u$ such that $[L_b,\underbrace{S,\ldots,S}_u]=0$ using induction on the 
nilpotency class of $S$. Since $[S,S]$ is nilpotent of smaller class, 
there exists a $(c,d,|FH|)$-bounded number $u_1$ such that $[L_b,\underbrace{[S,S],\ldots,[S,S]}_{u_1}]=0$. 

Set $l=d+(c-1)\left|F\right|$ and 
$W=[L_b,S_{i_1},\ldots,S_{i_l}]$ for some choice of $S_{i_j}$ in $S_F \cup \{S_{f} ; \ f\in F \}$. 
It is clear that for any permutation $\pi$ of the symbols $i_1,\ldots,i_l$ we have 
$W\leq [L_b,S_{\pi(i_1)},\ldots,S_{\pi(i_l)}]+[L_b,[S,S]]$. Note that the number $l$ 
is big enough to ensure that $S_F$ occurs at least $d$ times or some $S_j \in \{S_{f} ; \ f\in F \}$ 
occurs at least $c$ times in the list $S_{i_1},\ldots,S_{i_l}$. 
Thus, if $m = \max\{c,d\}$ we deduce that there exists $i$ such that 
$W\leq [[L_b,\underbrace{S_i,\ldots,S_i}_{m}], *,\ldots,*]+[L_b,[S,S]]$,  
where the asterisks denote some of the subalgebras $S_{i_j} \in S_F \cup \{S_{f} ; \ f\in F \}$ which, in view of the fact that 
$[L_b,\underbrace{S_i,\ldots,S_i}_{m}]=0$, are of no consequence. Hence, $W\leq [L_b,[S,S]]$.

Further, for any choice of $S_{i_1},\ldots,S_{i_l} \in S_F \cup \{S_{f} ; \ f \in F \}$ the same 
argument shows that $$[W,S_{i_1},\ldots,S_{i_l}]\leq [W,[S,S]]\leq [L_b,[S,S],[S,S]].$$ 
More generally, for any $n$ and any $S_{i_1},\ldots,S_{i_{nl}} \in S_F \cup \{S_f; \ f \in F\}$ 
we have $$[L_b,S_{i_1},\ldots,S_{i_{nl}}]\leq [L_b,\underbrace{[S,S],\dots,[S,S]}_n].$$

Put $u=u_1l$. The above shows that 
$$[L_b,S_{i_1},\ldots,S_{i_{u}}]\leq [L_b,\underbrace{[S,S],\dots,[S,S]}_{u_1}]=0.$$ 
Of course, this implies that $[L_b,\underbrace{S,\ldots,S}_u]=0$. 
The lemma is now straightforward from the case where $S=L_0$.
\end{proof}

In view of Lemma \ref{L_0}, Theorem \ref{Liealgthe} holds if we show the following:

\begin{lemma}\label{metablemma} Let $L$ be metabelian. There exists a 
$(c,d,\left|FH\right|)$-bounded number $v$ such that $[[L,L] \cap L_0,\underbrace{L_a,\ldots,L_a}_{v}]=0, 
\ \mbox{for all} \ a \in \mathbb{Z}/p\mathbb{Z}$.
\end{lemma}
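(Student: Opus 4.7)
I would split on the value of $a\in\mathbb{Z}/p\mathbb{Z}$. For $a=0$ the claim is immediate: $L_0'\subseteq L_0$ and $L_0=C_L(\varphi)$ is nilpotent of class $d$ (since $\varphi\in F^\#$), so $[L_0',\underbrace{L_0,\ldots,L_0}_d]=0$.

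For $a\neq 0$ I would pass to a module-theoretic picture. Since $L$ is metabelian, $L'=[L,L]$ is abelian and the adjoint action of $L$ on $L'$ factors through $L/L'$; hence $[w,z_1,\ldots,z_k]$ is symmetric in the $z_i$ whenever $w\in L'$, and $L'$ becomes a module over the symmetric algebra $S(L/L')$. Combining Lemma \ref{lemma dec L0} with the $F/Z$-coprime decomposition of $L_0$ and the $F$-invariance of $L'$ yields
\[ L_0' \;=\; (L'\cap C_L(F)) + \sum_{f\in F}(L'\cap V_f),\]
with every summand lying in $L'$, so by linearity it suffices to bound the commutator for $w$ in one of these pieces.

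If $w\in L'\cap C_L(F)$, writing $F=\langle\varphi,\psi\rangle$ and decomposing $L_a=\bigoplus_j L_{a,j}$ along the $\psi$-eigenspaces, each $L_{a,j}$ is contained in $C_L(z_j)$ for a suitable $z_j\in F^\#\setminus Z$ (the element $z_j=\varphi^{a'}\psi^{b'}$ with $a'a+b'j\equiv 0\pmod p$). Then $w\in C_L(F)\subseteq C_L(z_j)$ and $C_L(z_j)$ is nilpotent of class $d$, so $[w,\underbrace{y,\ldots,y}_d]=0$ for $y\in L_{a,j}$. Expanding $[w,x,\ldots,x]$ for $x=\sum_j y_j$ by multilinearity and metabelian rearrangement (permissible since $w\in L'$) and grouping by $j$, a pigeonhole argument shows that some $L_{a,j}$-component appears at least $d$ times whenever $v>p(d-1)$, so the commutator vanishes.

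If $w\in L'\cap V_f$, then $w\in C_L(H^f)$, which is nilpotent of class $c$. The $H^f$-invariant sum $X_f=x+x^{h^f}+\cdots+x^{(h^f)^{q-1}}\in C_L(H^f)$ then gives $[w,X_f,\ldots,X_f]=0$ with $c$ copies, equivalently
\[ (\overline{x}+\overline{x^{h^f}}+\cdots+\overline{x^{(h^f)^{q-1}}})^c\cdot w=0 \]
in the $S(L/L')$-module $L'$. To extract the pure $\overline{x}^{\,c}$-contribution, which equals $\pm[w,x,\ldots,x]$, I would vary $f$ over $F$; the images $\overline{x^{(h^f)^j}}$ depend on $f$ through Fourier-type coefficients in $\omega$, and combining the resulting family of identities by a Vandermonde-type argument isolates $\overline{x}^{\,c}\cdot w=0$. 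The main obstacle I expect lies precisely in this isolation step, since several compositions $(n_0,\ldots,n_{q-1})$ of $c$ may contribute to the same $\varphi$-eigenspace; handling it carefully yields the desired $(c,d,|FH|)$-bounded value of $v$.
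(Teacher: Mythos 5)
Your treatment of the case $a=0$ and of the piece $L'\cap C_L(F)$ is sound and essentially the paper's own first step: decomposing $L_a$ into pieces lying in centralizers $C_L(y)$ with $y\in F^{\#}\setminus Z$, using nilpotency of class $d$ of those centralizers together with metabelian rearrangement and pigeonhole. (The paper phrases this as $L_a=\sum_{y\in F\setminus Z}C_{L_a}(y)$ with $v=k(d-1)+1$, $k=|F\setminus Z|$, rather than via $\psi$-eigenspaces, but the content is the same.)

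The gap is in the $V_f$-piece. For a fixed $w\in L'\cap V_f=[L,L]\cap C_L(ZH^f)$ you only have the single identity $[w,X_f,\ldots,X_f]=0$: for $f'\neq f$ the element $w$ is not centralized by $H^{f'}$ (in a Frobenius group $N_F(H)=1$, so distinct $f$ give genuinely different complements), hence ``varying $f$ over $F$'' does not produce a family of valid identities from which to run a Vandermonde elimination. Moreover, the isolation of the pure monomial $\overline{x}^{\,c}$ that you flag as the main obstacle is a real one (several compositions $(n_0,\dots,n_{q-1})$ of $c$ can land in the same $\varphi$-eigenspace, e.g.\ when $c\geq p$), and the paper does not attempt it. The correct move, which your setup is missing, is to exploit that $\varphi$ centralizes $V_f$: applying $\varphi^j$ to the one identity gives $[w,T^{\varphi^j},\ldots,T^{\varphi^j}]=0$ for $j=0,\dots,q-1$, where $T$ is the span of the $H^f$-invariant sums; then Lemma~\ref{auxlemma} (the Vandermonde step, using that $p$ is invertible in the ground ring) yields $L_a\subseteq\sum_{j=0}^{q-1}T^{\varphi^j}$, and a pigeonhole over these $q$ subspaces with $v=(c-1)q+1$ entries, permutable because $L$ is metabelian, finishes the proof without ever isolating $\overline{x}^{\,c}$. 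As written, your argument for the $V_f$-pieces does not close.
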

\begin{proof}
Recall that $L_0 =  C_L(F) + \sum_f V_f$ and set $L'_F = [L,L] \cap C_L(F)$. 
We have $[L,L] \cap L_0 = L'_F  + \sum_f [L,L]\cap V_f$ where $V_f=C_L(ZH^f)$. Set 
$V=C_L(ZH)$ and $V'=[L,L]\cap V$. 

First we prove that 
$[L'_F,\underbrace{L_a,\ldots,L_a}_{v}]=0$ for any 
$a \in \mathbb{Z}/p\mathbb{Z}$.

Note that we can write $L_a = \sum_{y \in F \setminus Z} C_{L_a}(y)$ and let $k$ be the number of 
elements of $F \setminus Z$. Therefore,

$$[L'_F,\underbrace{L_a,\ldots,L_a}_{k(d-1)+1}] = 
\sum_{y \in F \setminus Z} [L'_F,\underbrace{C_{L_a}(y),\ldots,C_{L_a}(y)}_{d},*,\ldots,*] = 0.$$

Here the asterisks denote some subspaces of $L_a$ which are not important since the
commutator is 0 anyway.

Second we prove that $[V',\underbrace{L_a,\ldots,L_a}_{v}]=0$ for any 
$a \in \mathbb{Z}/p\mathbb{Z}$.

For any $u_a \in L_a$ we have $u_a + u_{ra} + \cdots + u_{r^{q-1}a} \in C_L(H)$ (under Index Convention). Thus, 
$$[V',\underbrace{u_a + u_{ra} + \cdots + u_{r^{q-1}a},\ldots,v_a + v_{ra} + \cdots + v_{r^{q-1}a}}_c] = 0$$
for any $c$ elements $u_a,\ldots,v_a \in L_a$.

Let $T$ denote the span of all the sums $x_a + x_{ra} + \cdots + x_{r^{q-1}a}$ over $x_a \in L_a$ 
(in fact, $T$ is the fixed-point subspace of $H$ on $\bigoplus_{i=0}^{q-1}L_{r^ia}$). Then the 
latter equality means that
$$[V',\underbrace{T,\ldots,T}_c] = 0.$$

Applying $\varphi^j$ we also obtain
$$[V',\underbrace{T^{\varphi^j},\ldots,T^{\varphi^j}}_c] = 
[(V')^{\varphi^j},\underbrace{T^{\varphi^j},\ldots,T^{\varphi^j}}_c] = 0.$$

A Vandermonde-type linear algebra argument shows that $L_a \subseteq \sum_{j=0}^{q-1}T^{\varphi^j}$. 
Actually this fact is a consequence of the following result:

\begin{lemma}\cite[Lemma 5.3]{KhuMakShu}\label{auxlemma} Let $\left\langle \alpha\right\rangle$ 
be a cyclic group of order $n$, and $\omega$ a primitive nth root of unity. 
Suppose that $M$ is a $\mathbb{Z}[\omega]\left\langle\alpha\right\rangle$-module 
such that $M = \sum_{i=1}^{m} M_{t_i}$,
where $x\alpha = \omega^{t_i}x$ for $x \in M_{t_i}$ and 
$0 \leq t_1 < t_2 < \cdots < t_m < n$. 
If $z = y_{t_1} + y_{t_2} + \cdots + y_{t_m}$ for $y_{t_i} \in M_{t_i}$, 
then for some $m$-bounded number $d_0$ every element $n^{d_0}y_{t_s}$ is 
a $\mathbb{Z}[\omega]$-linear combination of the elements 
$z, z\alpha, \ldots, z\alpha^{m-1}$.
\end{lemma}

Now we can apply Lemma \ref{auxlemma} with $\alpha = \varphi$, 
$M = L_a + L_{ra} + \cdots + L_{r^{q-1}a}$ 
and $m = q$ to $w = u_a + u_{ra} + \cdots + u_{r^{q-1}a} \in T$ for any $u_a \in L_a$, 
because here the indices $r^ia$ can be regarded as pairwise distinct residues modulo 
$p$ ($r$ is a primitive $q$th root of 1 modulo $p$). Since $p$ is invertible in our 
ground ring, follows that $L_a \subseteq \sum_{j=0}^{q-1}T^{\varphi^j}$.

Set $v = (c-1)q + 1$. We now claim that $[V',\underbrace{L_a,\ldots,L_a}_v]=0$. Indeed, after replacing 
$L_a$ with $\sum_{j=0}^{q-1}T^{\varphi^j}$ and expanding the sums, in each commutator 
of the resulting linear combination we can freely permute the entries $T^{\varphi^j}$, since 
$L$ is metabelian. Since there are sufficiently many of them, we can place at least $c$ 
of the same $T^{\varphi^{j_0}}$ for some $j_0$ right after $V'$ at the beginning, which gives 0.

Note that in the case where $f \in F\setminus Z$ we can consider the Frobenius 
group $ZH^f$ and conclude in a similar way that $$[[L,L]\cap V_f,\underbrace{u_a,\ldots,v_a}_v] = 0$$
for any $v$ elements $u_a,\ldots,v_a \in L_a$.

\end{proof}

\section{Proof of Theorem \ref{main2}}

By Lemma \ref{submet} we can assume that $F$ is an elementary abelian group of order $p^2$ for some prime $p$. We know that $G$ is nilpotent and there exists a constant $P=P(c,|H|)$ such that if $p> P$, then the nilpotency class of $G$ is $(c,|H|)$-bounded  (see \cite[Theorem 1.5]{AS}). We wish to show that the nilpotency class of $G$ is $(c,|H|)$-bounded when $p\leq P$. In particular, we can assume that $|F|$ is $(c,|H|)$-bounded.
 
It is easy to see that without loss of generality we may assume that $G$ is a $p$-group. Consider the associated Lie ring of the group 
$G$ $$L(G)=\bigoplus_{i=1}^{n}\gamma_i/\gamma_{i+1},$$ where $n$ is the nilpotency class of $G$ and $\gamma_i$ are the terms of the lower central series of $G$. The nilpotency class of $G$ coincides with the nilpotency class of $L(G)$. The action of the group $FH$ on $G$ induces naturally an action of $FH$ on $L(G)$ and the subrings $C_{L(G)}(H)$ and $C_{L(G)}(x)$ are nilpotent of class at most $c$ for any $x\in F^{\#}$ since the action is coprime. Theorem \ref{Liealgthe} now tells us that $L(G)$ is nilpotent 
of $(c,|H|)$-bounded class. The proof is complete.

\baselineskip 11 pt


\begin{thebibliography}{ABC}


\bibitem{AS} C. Acciarri, P. Shumyatsky, Double automorphisms of graded Lie algebras, Journal 
of Algebra,  387  (2013), 1-10. 

\bibitem{Bla} N. Blackburn, On prime-power groups with two generators,  Proc. Cambridge Philos. Soc.,
 54 (1958), 327-337.


\bibitem{CalMelShu}  J. Caldeira, E. de Melo, P. Shumyatsky, On groups and Lie algebras admitting a Frobenius 
group of automorphisms, J. Pure Appl. Algebra,  216 (2012) 2730-2736.


\bibitem{CalMel1} J. Caldeira, E. de Melo, Supersolvable Frobenius groups with nilpotent centralizers, 
 J. Pure Appl. Algebra,  223 (2019), 1210-1216.


\bibitem{GO} D. Gorenstein, Finite Groups, New York, Evanston, London: Harper and Row, (1968).


\bibitem{Hig} G. Higman, Groups and rings which have automorphisms without non-trivial fixed elements,  
J. London Math. Soc.,  32 (1957), 321-334.


\bibitem{khukhro} E. I. Khukhro, Nilpotent Groups and their Automorphisms, Berlin; New York: de Gruyter, (1993).


\bibitem{Khu} E. I. Khukhro, On solvability of Lie rings with an automorphism of finite order,  
Siberian. Math. J.,  42 (2001), 996-1000.

\bibitem{KhuMakShu} E. I. Khukhro, N. Y. Makarenko, P. Shumyatsky, 
Frobenius groups of automorphisms and their fixed points, Forum Math.,  26 (2014), 73-112.


\bibitem{KourNot} E. I. Khukhro, V. D. Mazurov (Eds),  Unsolved Problems in Group Theory,   
The Kourovka Notebook, 18th edition, Institute of Mathematics, Novosibirsk, (2014).

\bibitem{KhuShu} E. I. Khukhro, P. Shumyatsky, Nilpotency of finite groups with Frobenius 
groups of automorphisms, Monatsh. Math.,  163 (2011), 461-470.


\bibitem{Eme2} E. de Melo, Nilpotent residual and Fitting subgroup
of fixed points in finite groups, J. Group Theory, 22 (2019), 1059-1068.


\bibitem{Eme1} E. de Melo, P. Shumyatsky, Finite groups and their coprime automorphisms, 
 Proc. Amer. Math. Soc., 145 (2017), 3755-3760.


\bibitem{Sh} P. Shumyatsky, Finite groups and the fixed points of coprime  automorphisms,  
\ Proc. Amer. Math. Soc.,  129 (2001), 3479-3484.



\bibitem{T} J. G. Thompson, Finite groups with fixed-point-free automorphisms of prime order,  
\ Proc. Nat. Acad. Sci. U.S.A., 45 (1959),  578-581. 


\bibitem{WC} Y. M. Wang, Z. M. Chen, Solubility of finite groups admitting a coprime order
operator group (English, with Italian summary), Boll. Un. Mat. Ital. A, 7 (1993), no. 3,
325-331.


\bibitem{War} J. N. Ward, On finite groups admitting automorphisms with nilpotent fixed-point group,  
 Bull. Aust. Math.,  5 (1971), 281-282.

\end{thebibliography}
\end{document}